\newtheorem{theorem}{Theorem}
\newtheorem{proposition}{Proposition} 
\newtheorem{remark}{Remark} 
\newtheorem{definition}{Definition} 
\newtheorem{example}{Example} 
\begin{document}
	
\title[On the stability of hybrid polycycles]{On the stability of hybrid polycycles}

\author[Paulo Santana and Leonardo Serantola]{Paulo Santana$^1$ and Leonardo Serantola$^1$}

\address{$^1$ IBILCE--UNESP, CEP 15054--000, S. J. Rio Preto, S\~ao Paulo, Brazil}
\email{paulo.santana@unesp.br; l.serantola@unesp.br}

\subjclass[2020]{34A38; 34C37; 34D45}

\keywords{Hybrid dynamical systems; polycycles; singular cycles; graphics of a vector field}

\begin{abstract}
	In this paper we provide the stability of generic polycycles of hybrid planar vector fields, extending previous known results in the literature. The polycycles considered here may have hyperbolic saddles, tangential singularities and jump singularities. 
\end{abstract}

\maketitle

\section{Introduction}\label{Sec1}

Let $X$ be a planar smooth vector field (i.e. of class $C^\infty$). A \emph{graphic} of $X$ is a compact, non-empty invariant subset which is a continuous (not necessarily homeomorphic) image of $\mathbb{S}^1$, composed by some singularities $p_1,\dots,p_n,p_{n+1}=p_1$ and distinct regular orbits $L_1,\dots,L_n$ such that $L_i$ is an orbit from $p_{i+1}$ to $p_i$, see Figure~\ref{Fig16}.
\begin{figure}[ht]
	\begin{center}
		\begin{overpic}[width=5cm]{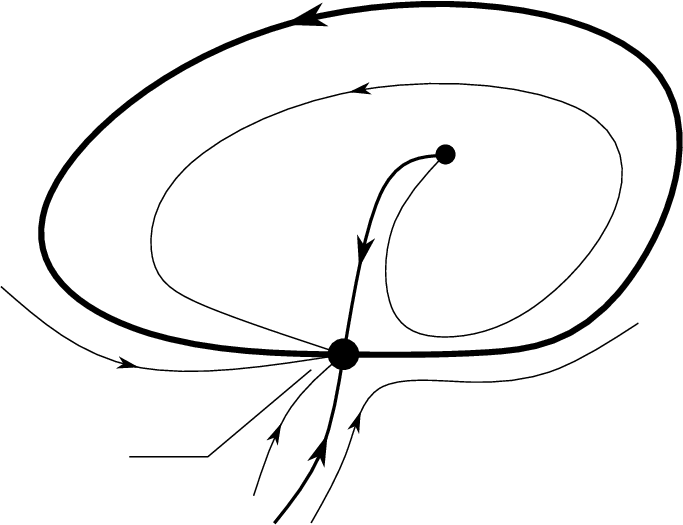}
		%\begin{overpic}[width=5cm,grid,tics=5]{Fig6.eps} 
			\put(10,64){$L_1$}
			\put(12,9){$p_1$}
		\end{overpic}
	\end{center}
\caption{Illustration of a graph given by the connection of the separatrix of a saddle-node with its parabolic sector.}\label{Fig16}
\end{figure}

A \emph{polycycle} is a graphic with a well defined first return map on one of its sides. See Figure~\ref{Fig7}$(a)$.
\begin{figure}[ht]
	\begin{center}
		\begin{minipage}{8cm}
			\begin{center} 
				\begin{overpic}[height=5cm]{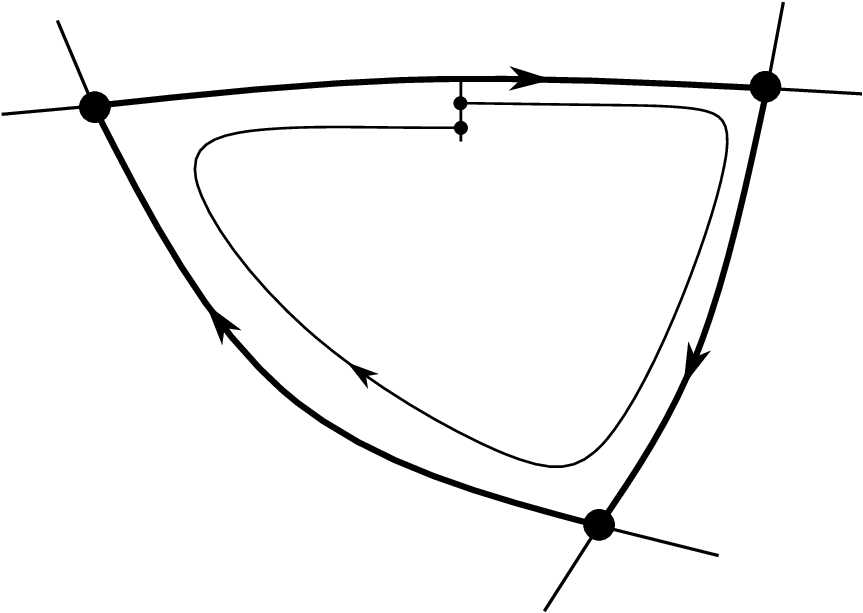} 
				%\begin{overpic}[height=5cm,grid,tics=5]{Fig13.eps} 
					\put(69,5){$p_1$}
					\put(91,63){$p_2$}
					\put(5,54){$p_3$}
					
					\put(85,30){$L_1$}
					\put(50,65){$L_2$}
					\put(27,22){$L_3$}
				\end{overpic}
				
				$(a)$
			\end{center}
		\end{minipage}
		\begin{minipage}{8cm}
			\begin{center} 
				\begin{overpic}[height=5cm]{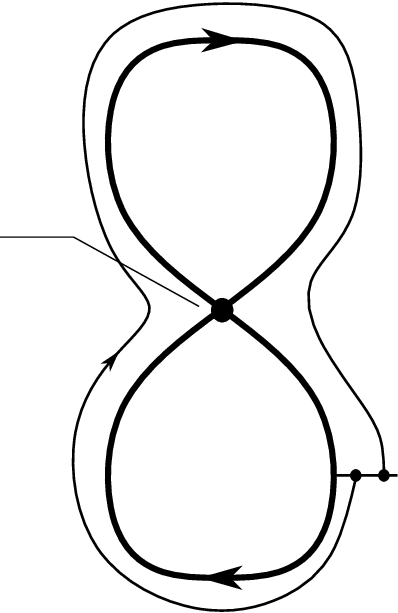} 
				%\begin{overpic}[height=5cm,grid,tics=5]{Fig7.eps} 
					\put(-10,64){$p_1=p_2$}
					\put(45,75){$L_1$}
					\put(20,20){$L_2$}
				\end{overpic}
				
				$(b)$
			\end{center}
		\end{minipage}
	\end{center}
\caption{Illustration of a polycycle $(a)$ homeomorphic and $(b)$ not homeomorphic to $\mathbb{S}^1$.}\label{Fig7}
\end{figure}
In addition with singularities and periodic orbits, it follows from the Poincar\'e-Bendixson Theorem (see Perko~\cite[Section~$3.7$]{PerkoBook}) that graphics are the only possible limit sets of a bounded orbit of $X$. The study about polycycles goes back to Poincar\'e and Dulac \cite{Dulac} in the first half of the $20$th century. In recent years there were many studies about graphics and polycycles, dealing for example with its cyclicity \cite{Mourtada,Mourtada2,Mourtada3,Mourtada4,SheHanTia2020,HanWuBi2004,HanZhu2007,MarVil2022,Duk2023,BGS}, stability \cite{GasManMan,HanHuLiu2003} and the bifurcation of critical periodic orbits \cite{MarVil2020,MarVil2021}. In particular Cherkas~\cite{Cherkas} characterized the stability of polycycles under generic conditions. For more details about the theory of polycycles in smooth vector fields, we refer to Roussarie~\cite[Chapter $5$]{Roussarie} and the references therein. 

With the development of the \emph{Filippov convention}~\cite{Filippov} for non-smooth planar vector fields in the second half of the $20$th century, it was natural to extend known results in the literature for smooth vector fields, to the Filippov one. On the wake of such effort, Buzzi et al~\cite{BuzCarEuz2018} extended the Poincar\'e-Bendixson Theorem to the Filippov vector fields and proved that (pseudo) graphics and polycycles remain a very important limit set on Filippov systems, see Figure~\ref{Fig17}. 
\begin{figure}[ht]
	\begin{center}
		\begin{minipage}{8cm}
			\begin{center} 
				\begin{overpic}[width=7cm]{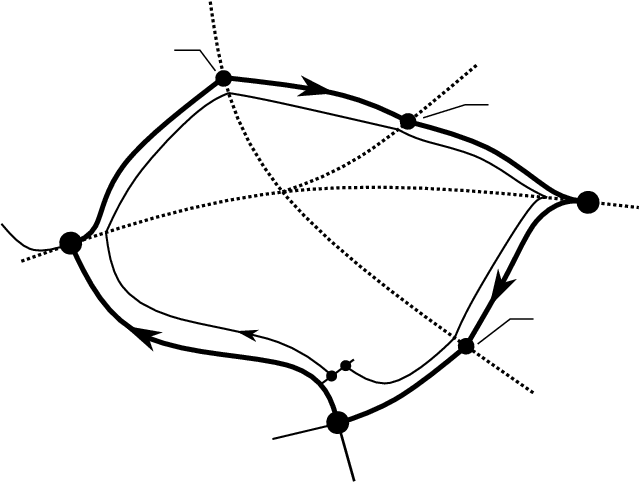} 
				%\begin{overpic}[width=7cm,grid,tics=5]{Fig14.eps} 
					\put(48,4){$p_1$}
					\put(92,38){$p_2$}
					\put(8,41){$p_3$}
					
					\put(68,13){$L_1$}
					\put(52,62){$L_2$}
					\put(25,16){$L_3$}
					
					\put(22,67){$x_1$}
					\put(77,58){$x_2$}
					\put(85,24){$x_3$}
					
					\put(40,40){$\Sigma$}
				\end{overpic}
			\end{center}
		\end{minipage}
		\begin{minipage}{8cm}
			\begin{center} 
				\begin{overpic}[width=7cm]{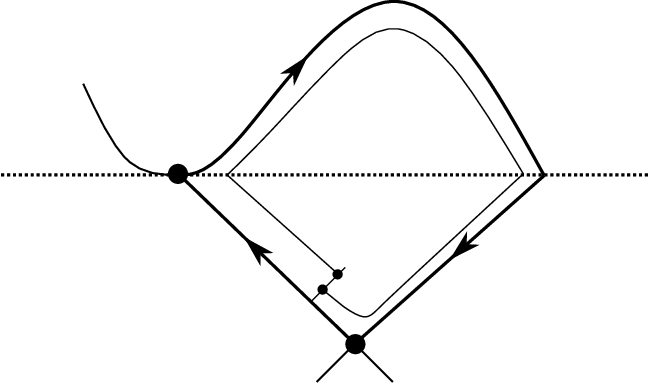} 
				%\begin{overpic}[width=7cm,grid,tics=5]{Fig8.eps} 
					\put(52.5,1){$p_1$}
					\put(25,35){$p_2$}
					\put(78,45){$L_1$}
					\put(39,12){$L_2$}
					\put(98,33){$\Sigma$}
				\end{overpic}
			\end{center}
		\end{minipage}
	\end{center}
	\caption{Illustration of a polycycle on planar Filippov systems.}\label{Fig17}
\end{figure}
It is also noted worthy to observe the recent work of Han and Zhou~\cite{Han3} characterizing the stability of limit cycles in Filippov systems, extending the classical results of Poincar\'e first return map in the smooth case. In recent years there were many studies about graphics and polycycles in Filippov systems, dealing for example with its bifurcation diagrams \cites{AndGomNov2023,AndJefMarTei2022,BuzCarTei2012,NovTeiZel,Han1,Han2} and regularization \cites{NovRon2,NovRon}. In particular the author in~\cite{San2023} extended the result of Cherka, characterizing the stability of polycycles in Filippov systems under generic conditions. For more details, we refer to Andrade, Gomide and Novaes~\cite{AndGomNov2023} and the references therein. 

With the recent development of the theory of \emph{hybrid systems}, it is natural to seek for a generalization of the concepts and tools of smooth and Filippov systems to the hybrid framework. Roughly speaking, a hybrid dynamical system is a system that can both \emph{flow} and \emph{jump}, see Figure~\ref{Fig1}.
\begin{figure}[ht]
	\begin{center}
		\begin{overpic}[width=8cm]{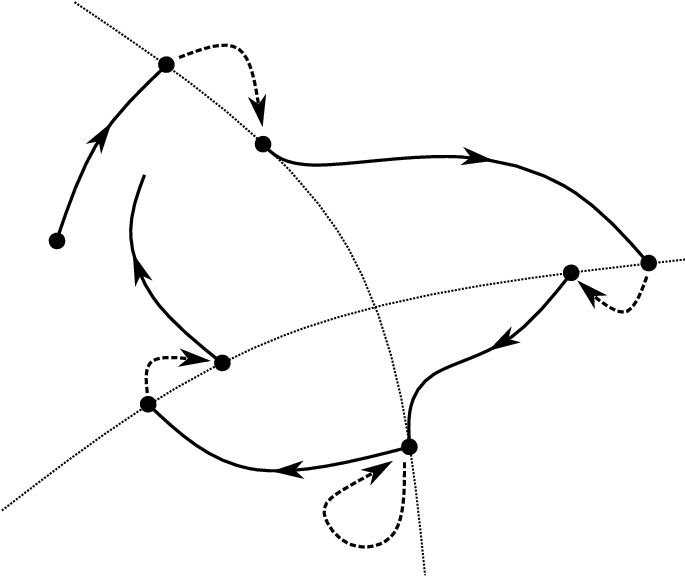}
		%\begin{overpic}[width=8cm,grid,tics=5]{Fig1.eps} %Esse é só para colocar os \put
			\put(7,45){$q$}
			\put(23,78){$p$}
			\put(35,59){$\overline{p}$}
			\put(38,75){$\varphi_1$}
			\put(63,0){$\Sigma_1$}
			\put(0,5){$\Sigma_2$}
			\put(90,35){$\varphi_2$}
			\put(45,3){$\varphi_1$}
			\put(16,32){$\varphi_2$}
			\put(70,75){$A_1$}
			\put(0,70){$A_2$}
			\put(20,0){$A_3$}
			\put(85,15){$A_4$}
		\end{overpic}
	\end{center}
	\caption{Illustration of an orbit of a hybrid system. The solid lines represent the orbit given by the flow of a smooth vector field. The dashed lines represent the instantaneous jump of a point $p$, given by a map $\varphi_i$, when it reaches the switching manifold $\Sigma=\Sigma_1\cup\Sigma_2$.}\label{Fig1}
\end{figure}
For instance, for a generalization of the stability methods of Lyapunov functions, we refer to Akhmet~\cite[Chapter $5$]{Akh2011}. In this paper we generalize the concepts of Cherkas~\cite{Cherkas} and Santana~\cite{San2023} about the stability of polycycles in smooth and Filippov systems to the hybrid framework. More precisely, suppose that we have a given planar hybrid system with a polycycle (such concepts will be properly defined later), see Figure~\ref{Fig2}.
\begin{figure}[ht]
	\begin{center}
		\begin{minipage}{8cm}	
			\begin{center}
				\begin{overpic}[width=7cm]{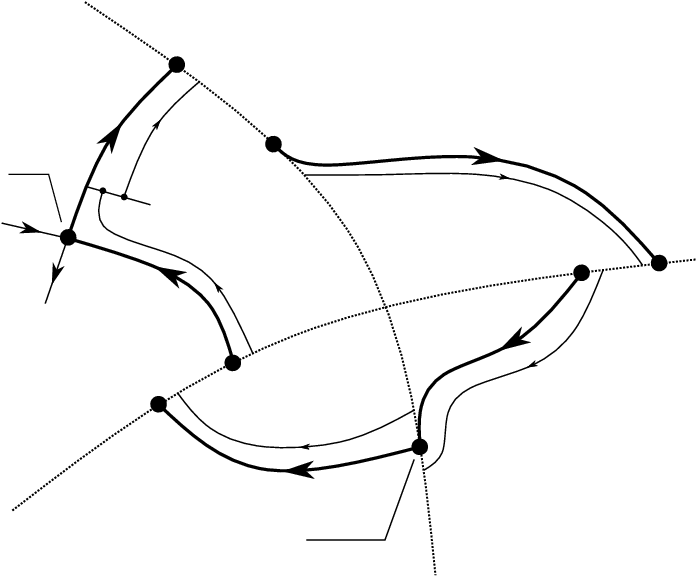}
				%\begin{overpic}[width=7cm,grid,tics=5]{Fig15.eps}
					\put(63,0){$\Sigma_1$}
					\put(0,5){$\Sigma_2$}
					\put(-4,57){$p_5$}
					\put(27.5,74){$p_4$}
					\put(41.5,62.5){$\overline{p_4}$}
					\put(95,41){$p_3$}
					\put(80,46){$\overline{p_3}$}
					\put(25,4){$p_2=\overline{p_2}$}
					\put(18,27){$p_1$}
					\put(34,26){$\overline{p_1}$}
					\put(19,39){$L_5$}
					\put(9,65){$L_4$}
					\put(75,61){$L_3$}
					\put(66,35){$L_2$}
					\put(33,11){$L_1$}
				\end{overpic}
			\end{center}
		\end{minipage}
		\begin{minipage}{8cm}	
			\begin{center}
				\begin{overpic}[width=7cm]{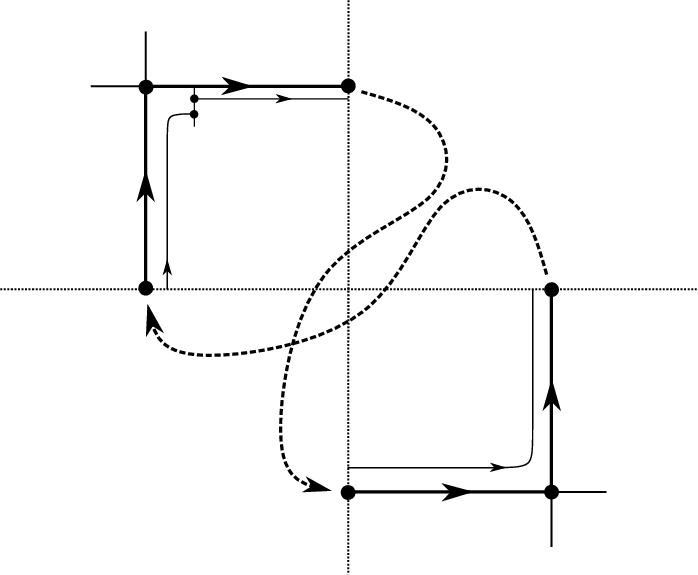}
				%\begin{overpic}[width=7cm,grid,tics=5]{Fig28.eps} 
					\put(15,72){$p_1$}
					\put(51,72){$p_2$}
					\put(51,7){$\overline{p_2}$}
					\put(81,8){$p_3$}
					\put(81,43){$p_4$}
					\put(14,36){$\overline{p_4}$}
					\put(98,42){$\Sigma_2$}
					\put(43,79){$\Sigma_1$}
					\put(33,20){$\varphi_1$}
					\put(25,27){$\varphi_2$}
					\put(13,50){$L_1$}
					\put(33,72){$L_2$}
					\put(65,6){$L_3$}
					\put(81,27){$L_4$}
				\end{overpic}
			\end{center}
		\end{minipage}
	\end{center}
\caption{Illustration of polycycles in hybrid systems. Here $\overline{p_i}$ denotes the image of $p_i$ after the jump. Observe that a jump may keep $p_i$ fixed while reversing the orientation, see $p_2$ on the left-hand side. In particular, polycycles can now be one-sided.}\label{Fig2}
\end{figure}
The goal of this paper is to approach such object and characterize its stability. There are some studies about polycycles in hybrid systems dealing, for example, with an extension of the Melnikov theory to study its cyclicity \cite{LiWeiZhaKap2023,LiMaZhaHao2016,LiWeiZhaYi1,WeiLiMorZha2}. Moreover, since Filippov systems can be seen as a particular case of a hybrid system (i.e. the case in which the jump is the identity map), simple hybrid systems are expected to have more limit cycles than their Filippov counterparts. For example, it follows from Llibre and Teixeira \cite{LliTei2018} that a Filippov system composed by two linear centers and such that the discontinuity is a straight line can not have limit cycles. However it follows from \cites{LiLiu2023,LliSan2024} that when we allow this Filippov system \emph{jump} at the discontinuity line we can have limit cycles. For a introduction about hybrid systems, we refer to the book of Schaft and Schumacher \cite{SchSch2000} and the paper of Branicky \cite{Bra2005}. For a recent survey in the field, we refer to Belykh, Kuske, Porfiri and Simpson~\cite{BKPS2023}.

As anticipated in the above discussion, in this paper we study the stability of polycycles in hybrid systems, extending the results of Cherkas~\cite{Cherkas} and Santana~\cite{San2023} about its smooth and Filippov counterparts to the hybrid case. More precisely let $X$ be a planar smooth vector field with a polycycle $\Gamma$, composed by $n$ singularities $p_1,\dots,p_n$, all of them being hyperbolic saddles. Let $\nu_i<0<\lambda_i$ be the associated eigenvalues of the saddle $p_i$, $i\in\{1,\dots,n\}$. The \emph{hyperbolicity ratio} of $p_i$ is the positive real number given by
\begin{equation}\label{2}
	r_i=\frac{|\nu_i|}{\lambda_i}.
\end{equation}
The \emph{graphic number} of $\Gamma$ is the positive real number given by
\begin{equation}\label{11}
	r=\prod_{i=1}^{n}r_i.
\end{equation}
Cherkas~\cite{Cherkas} proved that if $r>1$ (resp. $r<1$), then $\Gamma$ is stable (resp. unstable). In \cite{San2023} the author extended \eqref{2}, allowing the \emph{tangential singularities} of a given polycycle in a Filippov system also have a well defined hyperbolicity ratio. With this extension it was possible to extend the results of Cherkas to the Filippov case. In this paper we generalize the notion of hyperbolicity ratio to include a new kind of singularity that appears in hybrid systems, which we call \emph{jump singularities}. With this generalization we are now able to estabilish the notion of graphic number \eqref{11} and thus study the stability of a hybrid polycycle, which may contain hyperbolic saddles and tangential or jump singularities. 

The paper is organized as follows. In Section~\ref{Sec2} we have the precise statement of our main result. Section~\ref{Sec3} is devoted to organize the preliminaries results. The main result is proved in Section~\ref{Sec4}.

\section{Statement of the main results}\label{Sec2}

Before we establish the technical definitions, we present one of the usual examples of an hybrid dynamical system. The \emph{bouncing ball model}~\cite{LliSan2024}. 
\begin{example}[Example~$1$ of \cite{LliSan2024}]\label{Ex1}
	Consider the vertical motion of a ball dropped (or tossed) from an initial height $h>0$, with initial velocity $v\in\mathbb{R}$ (here negative velocity means downwards following gravity, while positive velocity means that the ball was tossed upwards). If the ball is under the acceleration of constant gravity $g>0$, then for $h>0$ the state of the ball is governed by the system of differential equations,
	\begin{equation}\label{18}
		\dot h=v, \quad \dot v=-g.
	\end{equation}
	Therefore given any initial condition $q=(h_0,v_0)$, $h_0>0$, it follows from \eqref{18} that the ball reaches the ground $h=0$ after a finite amount of time $t_0>0$, with velocity $v(t_0)<0$. See Figure~\ref{Fig5}.
	\begin{figure}[ht]
		\begin{center}
			\begin{overpic}[height=5cm]{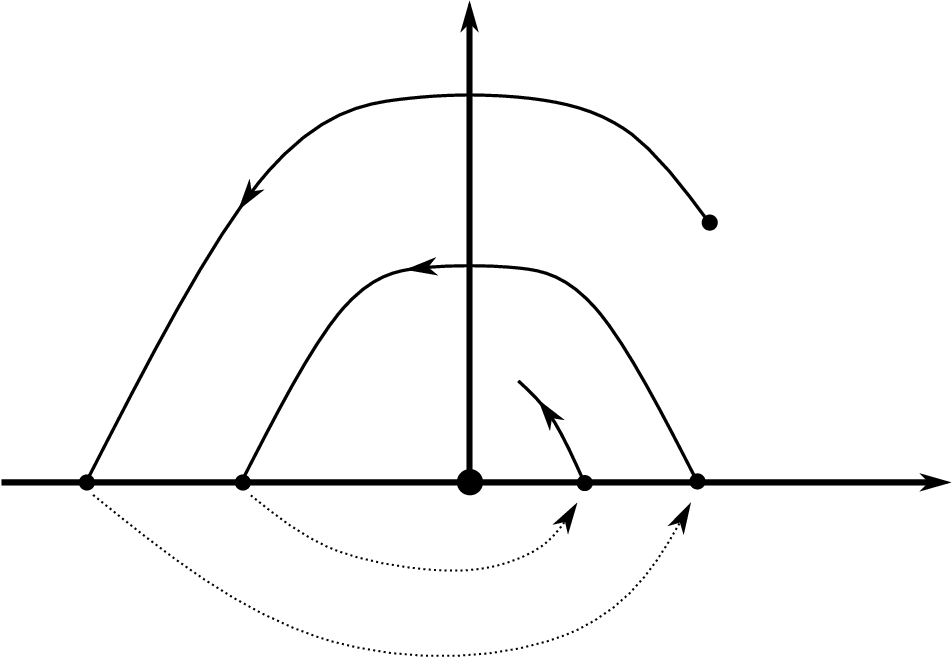} 
			%\begin{overpic}[height=5cm,grid,tics=5]{Fig22.eps} 
				\put(51,65){$h$}
				\put(97,20){$v$}
				\put(45,13){$\mathcal{O}$}
				\put(55,8){$\varphi$}
				\put(65,3){$\varphi$}
				\put(76,45){$q$}			
			\end{overpic}		
		\end{center}
		\caption{Illustration of an orbit of the bouncing ball model. For simplicity, we interchanged the coordinate axes.}\label{Fig12}
	\end{figure}
	At this instant the ball bounces back upwards and its velocity undergoes an instantaneous change $v(t_0)\mapsto -\rho v(t_0)$, modeled by an inelastic collision with dissipate factor $0<\rho<1$. That is, at $h=0$ our system undergoes a jump (or reset) $\varphi$ given by $\varphi(0,v)=(0,-\rho v)$. The reset map represents the instantaneous loss of energy that the ball suffers when hitting the ground. Observe that any orbit converges to the origin, which represents a ball standing still in the ground.
\end{example}
We now establish the necessary definitions and concepts for a proper statement of our main result. First, we start by defining what we mean by planar \emph{hybrid system}. 

Let $h_1,\dots,h_N\colon\mathbb{R}^2\to\mathbb{R}$, $N\geqslant 1$, be $C^\infty$-functions having $0$ as a regular value, i.e., such that $\nabla h_i(x,y)\neq(0,0)$ for every $(x,y)\in h^{-1}(\{0\})$, where $\nabla h$ denotes the gradient vector of $h$. For each $i\in\{1,\dots,N\}$, let $\Sigma_i=h_i^{-1}(\{0\})$ and observe that $\Sigma_i$ is a $1$-dimensional manifold. Let also $\Sigma=\cup_{i=1}^{N}\Sigma_i$. Let $A_1,\dots,A_M\subset\mathbb{R}^2$, $M\geqslant 2$, be the open connected components of $\mathbb{R}^2\backslash\Sigma$. For each $i\in\{1,\dots,M\}$, let $X_i$ be a $C^\infty$-planar vector field defined over $\overline{A_i}$, where $\overline{A_i}$ denotes the topological closure of $A_i$. For each $i\in\{1,\dots,N\}$, let $\varphi_i\colon\Sigma_i\to\Sigma_i$ be a $C^\infty$-map. To simplify the notation, let $\mathfrak{X}=\{X_1,\dots,X_M\}$ and $\Phi=\{\varphi_1,\dots,\varphi_N\}$. Let $Z=Z(\mathfrak{X},\Sigma,\Phi)$ be the associated \emph{hybrid system}. 

Before we state the way that the dynamics of $Z$ works, we need to establish some nomenclatures. First, we say that $X_1,\dots,X_M$ are the \emph{components} of $Z$ and that $\Sigma_1,\dots,\Sigma_N$ are the components of $\Sigma$. Let $p\in\Sigma$. We say that $p$ is \emph{regular} if the following statements hold.
\begin{enumerate}[label=(\alph*)]
	\item There is an unique $i_0\in\{1,\dots,N\}$ such that $p\in\Sigma_{i_0}$.
	\item For every $i\in\{1,\dots,N\}$, with $i\neq i_0$, we have $\varphi_{i_0}(p)\not\in\Sigma_i$.
\end{enumerate}
To simplify the notation let $\overline{p}=\varphi_{i_0}(p)$. It follows from statement $(a)$ that if $p\in\Sigma_{i_0}$ is regular, then there are exactly two connected components of $\mathbb{R}^2\backslash\Sigma$ having $p$ in its boundary. Moreover, $h_{i_0}$ is strictly positive in one of such components and strictly negative in the other. Let $A_p^+$ (resp. $A_p^-$) be the unique connected component of $\mathbb{R}^2\backslash\Sigma$ having $p$ in its boundary and such that $h_{i_0}(x,y)>0$ (resp. $h_{i_0}(x,y)<0$) for every $(x,y)\in A_p^+$ (resp. $(x,y)\in A_p^-$). Similarly let $A_{\overline{p}}^+$ and $A_{\overline{p}}^-$ be the analogous connected components associated to $\overline{p}$. We now define the dynamics of $Z$. Let $q\in\mathbb{R}^2\backslash\Sigma$ and let $A_{j_0}$ be the unique connected component of $\mathbb{R}^2\backslash\Sigma$ containing $q$ and let $X_{j_0}$ be the component of $Z$ defined over $\overline{A_{j_0}}$. The orbit of $Z$ through $q$ is given as follows. First we let $q$ follows the orbit of $X_{j_0}$. If such orbit never leaves $A_{j_0}$, then we are done, that is, the orbit of $Z$ is given by the orbit of $X_{j_0}$. Hence suppose that the orbit of $X_{j_0}$ through $q$ intersects $\Sigma$ in a point $p$. Suppose that $p\in\Sigma$ is regular and let $\overline{p}$, $A_p^+$, $A_p^-$, $A_{\overline{p}}^+$ and $A_{\overline{p}}^-$ be given as above. Observe that either $A_{j_0}=A_p^+$ or $A_{j_0}=A_p^-$. Without loss of generality suppose that $A_{j_0}=A_p^+$. After reaching $p$ the orbit \emph{jump} to $\overline{p}$. For simplicity let $A^-=A_{\overline{p}}^-$ and let $X^-$ be the component of $Z$ defined over $A^-$. Suppose now that the orbit $\gamma^-\colon[0,\varepsilon)\to\mathbb{R}^2$ of $X^-$ through $\overline{p}$ is well defined for some $\varepsilon>0$, with $\gamma^-(0)=\overline{p}$ and is such that $\gamma(t)\in A^-$ for every $t\in(0,\varepsilon)$. Thus after it jumps from $p$ to $\overline{p}$, we now let $\overline{p}$ follows the orbit of $X^-$ and thus we arrive at some $q_1\in A^-$. In particular we arrive at some $q_1\in\mathbb{R}^2\backslash\Sigma$. Hence we now repeat the process, i.e., when the orbit of $X^-$ through $q_1$ stays in $A^-$ then we are done. If it does not then it intersects $\Sigma$ in a point $p_1$ that we suppose to be regular and so on. See Figure~\ref{Fig1}.

We now briefly recall the notion of \emph{crossing} and \emph{tangential} points of Filippov systems, see Figure~\ref{Fig8}. 
\begin{figure}[ht]
	\begin{center}
		\begin{overpic}[width=8cm]{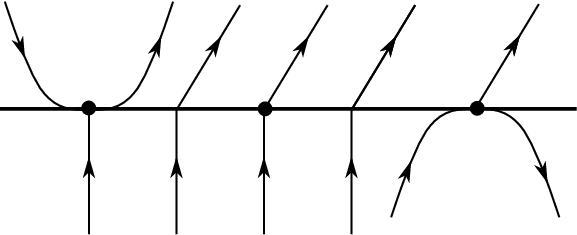}
		%\begin{overpic}[width=8cm,grid,tics=5]{Fig16.eps} 
			\put(13,25){$p_1$}
			\put(47,18){$p_2$}
			\put(82,18){$p_3$}
			\put(101,20){$\Sigma$}
		\end{overpic}
	\end{center}
	\caption{Illustration of a crossing $p_2$ and tangential points $p_1$ and $p_3$ of $Y$.}\label{Fig8}
\end{figure}
Let $Y=(Y^+,Y^-,\Sigma)$ be a planar Filippov system such $\Sigma=h^{-1}(0)$, for some smooth function $h\colon\mathbb{R}^2\to\mathbb{R}$, with $0$ being a regular value. Suppose also that $Y^+$ (resp. $Y^-$) is a smooth planar system defined in the region $h(x,y)\geqslant0$ (resp. $h(x,y)\leqslant0$). Let $p\in\Sigma$. The \emph{Lie derivative} of $h$ in the direction of $Y^+$ (resp. $Y^-$) at $p$ is given by
	\[Y^+h(p)=\left<Y^+(p),\nabla h(p)\right> \quad \bigl(\textnormal{resp. } Y^-h(p)=\left<Y^-(p),\nabla h(p)\right>\bigr),\]
where $\left<\cdot,\cdot\right>$ denotes the standard inner product of $\mathbb{R}^2$. In particular, observe that $Y^\pm h(p)=0$ if, and only if, $Y^\pm(p)$ is tangent to $\Sigma$ at $p$. Given $p\in\Sigma$, we say that $p$ is a \emph{crossing point} of $Y$ if
	\[Y^+h(p)Y^-h(p)>0.\]
Moreover, we say that $p$ is a \emph{tangential singularity} of $Y$ if
	\[Y^+(p)\neq0, \quad Y^-(p)\neq0 \quad \textnormal{and}  \quad Y^+h(p)Y^-h(p)=0.\]
Knowing that a Filippov system can be seen as a particular case of a hybrid system whose jumps are given by the identity map, we now propose the following generalization of crossing points and tangential singularities, see Figure~\ref{Fig9}.
\begin{figure}[ht]
	\begin{center}
		\begin{minipage}{5cm}
			\begin{center} 
				\begin{overpic}[height=5cm]{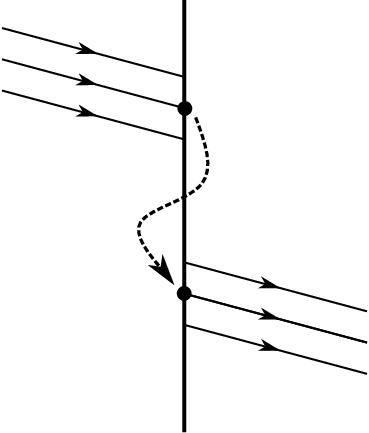} 
				%\begin{overpic}[height=5cm,grid,tics=5]{Fig17.eps} 
					\put(45,77){$p$}
					\put(37,25){$\overline{p}$}
					\put(50,55){$\varphi$}
					\put(45,95){$\Sigma$}
				\end{overpic}
				
				$(a)$
			\end{center}
		\end{minipage}
		\begin{minipage}{5cm}
			\begin{center} 
				\begin{overpic}[height=5cm]{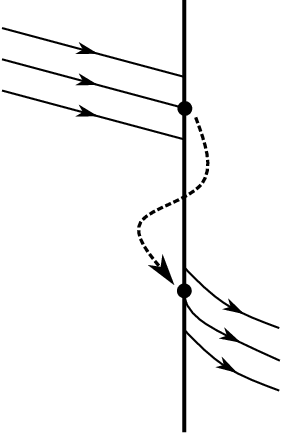} 
				%\begin{overpic}[height=5cm,grid,tics=5]{Fig18.eps} 
					\put(45,77){$p$}
					\put(37,25){$\overline{p}$}
					\put(50,55){$\varphi$}
					\put(45,95){$\Sigma$}
				\end{overpic}
				
				$(b)$
			\end{center}
		\end{minipage}
		\begin{minipage}{5cm}
			\begin{center} 
				\begin{overpic}[height=5cm]{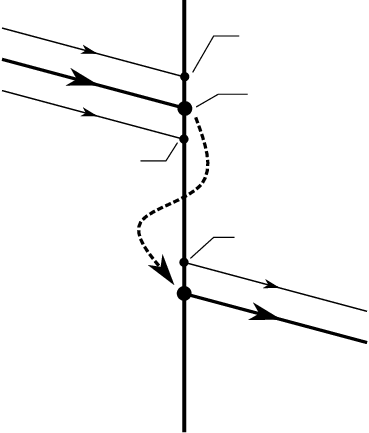} 
				%\begin{overpic}[height=5cm,grid,tics=5]{Fig21.eps} 
					\put(58,77){$p$}
					\put(37,25){$\overline{p}$}
					\put(57,43){$\overline{q_1}=\overline{q_2}$}
					\put(57,90){$q_1$}
					\put(25,62){$q_2$}
					\put(50,55){$\varphi$}
					\put(45,97){$\Sigma$}
				\end{overpic}
			
				$(c)$
			\end{center}
		\end{minipage}
	\end{center}
	\caption{Illustration of a $(a)$ jump crossing point, a $(b)$ jump singularity satisfying $(i)$ and $(c)$ a jump singularity satisfying $(ii)$.}\label{Fig9}
\end{figure}

\begin{definition}\label{definicao1}
	Let $Z=(\mathfrak{X},\Sigma,\Phi)$ be a hybrid system and $(p,\overline{p})\in\Sigma_i$ a regular point. Let $X_p^+$, $X_p^-\in\mathfrak{X}$ be the two components of $Z$ defined at $p$. Let also $\overline{p}=\varphi_i(p)$ and $X_{\overline{p}}^+$, $X_{\overline{p}}^-$ be the components of $Z$ defined at $\overline{p}$. We say that the pair $(p,\overline{p})$ is a \emph{jump crossing point} of $Z$ in $\varphi_i$ if there is a neighborhood $N\subset\Sigma_i$ of $p$ such that $\varphi|_N$ is a diffeomorphism and
	\begin{equation}\label{1}
		X^{+}_ph_i(p)X_{\overline{p}}^-h_i(\overline{p})>0.
	\end{equation}
	Moreover, we say that $(p,\overline{p})$ is a \emph{jump singularity} if $X_p^+(p)\neq0$, $X_{\overline{p}}^-(\overline{p})\neq0$ and if at least one of the following statements hold.
	\begin{enumerate}[label=(\roman*)]
		\item $X_p^+h_i(p)X_{\overline{p}}^-h_i(\overline{p})=0$.
		\item For every neighborhood $N\subset\Sigma_i$ of $p$, $\varphi|_N$ is not a diffeomorphism.
	\end{enumerate}
\end{definition}

\begin{remark}
	In the definition \ref{definicao1}, in the inequality \eqref{1} of jump crossing point, we are implicitly assuming that $p$ arrives at the switching manifold $\Sigma_i$ through the flow of $X_p^+$, while $\overline{p}$ leaves it through the flow of $X_{\overline{p}}^-$. If the opposite happens, then we replace \eqref{1} by $X^{-}_ph_i(p)X_{\overline{p}}^+h_i(\overline{p})>0$. A similar observation holds for the definition of jump singularity.
	
	For simplicity, throughout the paper giving a jump crossing point or jump singularity $(p,\overline{p})$, we always assume that $p$ arrives at the switching manifold $\Sigma_i$ through the flow of $X_p^+$, while $\overline{p}$ leaves it through the flow of $X_{\overline{p}}^-$.
\end{remark}

\begin{definition}
	Consider a point $q\in\Sigma$, $X$ one of the components of $Z$ defined at $q$ and let $X^kh_i(q)=\left<X(q),\nabla X^{k-1}h_i(q)\right>$, $k\geqslant2$. We say that $X$ has $m$-order contact with $\Sigma$ at $q$, $m\geqslant1$, if $m$ is the first positive integer such that $X^mh_i(q)\neq0$. If $X^kh_i(q)=0$ for every $k\in\mathbb{N}$, then we say that $q$ is a \emph{flat point} of $X$.
\end{definition}

\begin{definition}[Definition of Polycycle]
	Let $Z=(\mathfrak{X},\Sigma,\Phi)$ be a hybrid system. A \emph{graphic} of $Z$ is a compact, non-empty invariant subset composed by singularities $(p_1,\overline{p_1}),\dots,(p_n,\overline{p_n}),(p_{n+1},\overline{p_{n+1}})=(p_1,\overline{p_1})$ (not necessarily distinct), with each $(p_i,\overline{p_i})$ being either a jump singularity of $Z$ or a singularity of some component of $Z$ (in this case, $p_i=\overline{p_i}$); and distinct regular orbits $L_1,\dots,L_n$ such that $\overline{p_{i+1}}$ and $p_i$ are the $\alpha$ and $\omega$-limits of $L_i$.
	
	A \emph{polycycle} is a graphic with a well defined first return map $\pi\colon\ell\to\ell$. A polycycle is \emph{hyperbolic} if it satisfies the following statements.
	\begin{enumerate}[label=(\alph*)]
		\item Except perhaps by $\overline{p_{i+1}}$ and $p_i$, each orbit $L_i$ intersects $\Sigma$ at most in a finite number of points, with each of them being a jump crossing point.
		\item Each singularity $(p_i,\overline{p_i})$ satisfies exactly one of the following conditions:
		\begin{enumerate}[label=(\roman*)]
			\item $p_i=\overline{p_i}$ is a hyperbolic saddle and $p_i\not\in\Sigma$.
			\item $(p_i,\overline{p_i})$ is a jump singularity and $p_i$ (resp. $\overline{p_i}$) is not a flat point of $X_p^+$ (resp. $X_{\overline{p}}^-$). 
		\end{enumerate}
	\end{enumerate}
\end{definition}

From now on, we let $\Gamma^n$ denote a hyperbolic polycycle with $n$ singularities $(p_1,\overline{p_1}),\dots,(p_n,\overline{p_n})$ (not necessarily distinct) and $n$ distinct regular orbits $L_1,\dots, L_n$ such that $L_i$ is an orbit from $\overline{p_{i+1}}$ to $p_i$. See Figure~\ref{Fig10}.
\begin{figure}[ht]
	\begin{center}
		\begin{overpic}[width=8cm]{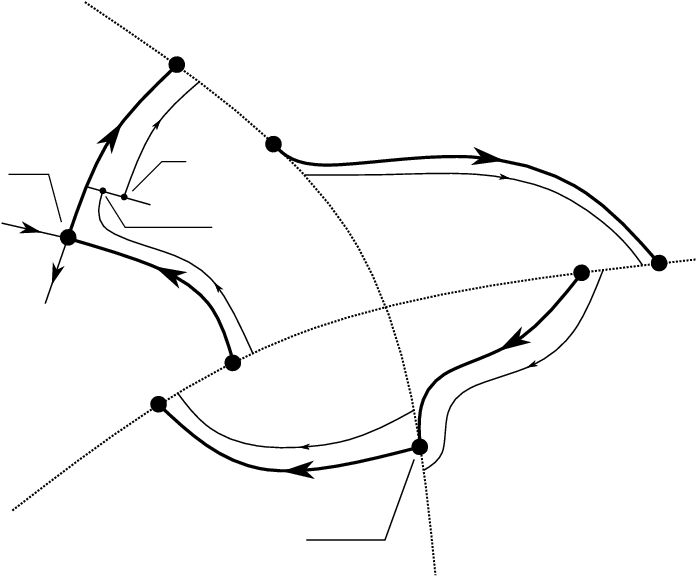}
		%\begin{overpic}[width=8cm,grid,tics=5]{Fig2.eps} %Esse é só para colocar os \put
			\put(63,0){$\Sigma_1$}
			\put(0,5){$\Sigma_2$}
			\put(-15,57){$p_5=\overline{p_5}$}
			\put(27.5,74){$p_4$}
			\put(41.5,62.5){$\overline{p_4}$}
			\put(95,41){$p_3$}
			\put(80,46){$\overline{p_3}$}
			\put(28,4.5){$p_2=\overline{p_2}$}
			\put(18,26){$p_1$}
			\put(34,26){$\overline{p_1}$}
			\put(19,39){$L_5$}
			\put(9,65){$L_4$}
			\put(75,61){$L_3$}
			\put(66,34){$L_2$}
			\put(35,11){$L_1$}
			\put(27.5,58.5){$x$}
			\put(31.5,49){$\pi(x)$}
			\put(22,52){$\ell$}
		\end{overpic}
	\end{center}
\caption{An example of a hyperbolic polycycle $\Gamma^5$. Observe that although we have $\varphi_1(p_2)=p_2$, the map $\varphi_1$ reversed the orientation of the returning map at $p_2$. In particular $p_2$ is a jump singularity but it is not a tangential singularity.}\label{Fig10}
\end{figure}

Given a jump singularity $(p,\overline{p})$ of $\Gamma^n$, let $L_s$, $L_u\in\{L_1,\dots,L_n\}$ be the orbits of $\Gamma^n$ such that $p$ is the $\omega$-limit of $L_s$ and $\overline{p}$ the $\alpha$-limit of $L_u$, see Figure~\ref{Fig11}. The \emph{stable} and \emph{unstable} components of $(p,\overline{p})$ are the components $X_s$, $X_u\in\{X_1,\dots,X_M\}$ of $Z$ defined over $L_s$ and $L_u$ respectively. 
\begin{figure}[ht]
	\begin{center}
		\begin{minipage}{6cm}
			\begin{center} 
				\begin{overpic}[height=5cm]{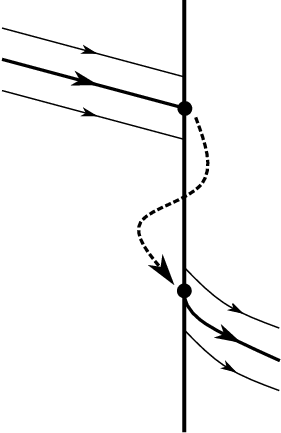} 
					%\begin{overpic}[height=5cm,grid,tics=5]{Fig19.eps} 
					\put(45,77){$p$}
					\put(37,25){$\overline{p}$}
					\put(50,55){$\varphi$}
					\put(45,95){$\Sigma$}
					\put(-9,84){$L_s$}
					\put(66,14){$L_u$}
				\end{overpic}
				
				$n_s=1$, $n_u=2k+1,k\in\mathbb{Z}$.
			\end{center}
		\end{minipage}
		\begin{minipage}{6cm}
			\begin{center} 
				\begin{overpic}[height=5cm]{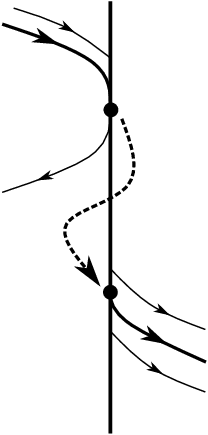} 
					%\begin{overpic}[height=5cm,grid,tics=5]{Fig20.eps} 
					\put(28,77){$p$}
					\put(19,25){$\overline{p}$}
					\put(32,55){$\varphi$}
					\put(28,95){$\Sigma$}					
					\put(-8,92){$L_s$}
					\put(48,14){$L_u$}
				\end{overpic}
				
				$n_s=2r$, $n_u=2s+1,r,s\in\mathbb{Z}$.
			\end{center}
		\end{minipage}
	\end{center}
	\caption{Illustration of $L_s$, $L_u$ and their different types of contact orders.}\label{Fig11}
\end{figure}

\begin{definition}
	Given a jump singularity $(p,\overline{p})$, let $X_s$ and $X_u$ be its stable and unstable components. We define the stable and unstable contact order of $p$ as the contact order $n_s$ and $n_u$ of $X_s$ and $X_u$ with $\Sigma$ at $p$ and $\overline{p}$ respectively. Moreover, if $n_u$ and $n_s$ are finite then we say that $p$ is a non-flat jump singularity of $Z$.
\end{definition}

It follows from \cite{San2023} that the contact orders $n_s$ and $n_u$ of a tangential singularity play the same role as the eigenvalues $\nu<0<\lambda$ of a hyperbolic saddle. In particular, the fraction $n_u/n_s$ play the same role as the hyperbolicity ratio~\eqref{2}. That is, $n_u/n_s$ characterizes when a tangential singularity contracts or expands the flow around it. In case of hybrid systems, we must take the action of the jump $\varphi$ into account, as it can also be contracting or expanding. We now address this.

Let $(p,\overline{p})\in\Sigma_i$ be a jump singularity of $Z$ and let $I$, $J\subset\Sigma_i$ be neighborhoods of $p$ and $\overline{p}$ respectively (we recall that $\Sigma_i$ is an one-dimensional manifold). After taking charts if necessary, we can identify $I=(-\delta,\delta)$ and $J=(-\varepsilon,\varepsilon)$ with $\delta>0$ and $\varepsilon>0$ small enough in addition to $p=0$ and $\overline{p}=0$. Hence we can write the $C^\infty$-map $\varphi_i\colon I\to J$ as
	\[\varphi_i(t)=\left(\sum_{k=1}^{\infty}a_kt^k\right)+R(t),\]
with $a_k\in\mathbb{R}$ and $R\colon I\to J$ a flat $C^\infty$-function such that $R(0)=0$. If $a_k\neq0$ for some $k\in\mathbb{N}$, then we say that $p$ is a jump singularity of \emph{finite power order}. Moreover, in this case if $k_0\in\mathbb{N}$ is the first integer such that $a_{k_0}\neq0$ then we say that $p$ has power of order $k_0$ or that $k_0$ is the power order of $p$. On the other hand if $a_k=0$ for every $k\in\mathbb{N}$, then we say that $p$ has \emph{infinity power order}.

\begin{definition}
	Let $\Gamma^n$ be a hyperbolic polycycle. The hyperbolicity ratio $r_i\in\mathbb{R}_{>0}\cup\{\infty\}$ of $p_i$ is defined as follows.
	\begin{enumerate}[label=(\alph*)]
		\item If $p_i$ is a jump singularity of finite power order then $r_i=\frac{n_{i,u}}{n_{i,s}}k_i$, where $n_{i,s}$ and $n_{i,u}$ are the stable and unstable contact orders of $p_i$ and $k_i$ is the power order of $p$.
		\item If $p_i$ is a jump singularity of infinity power order, then $r_i=\infty$.
		\item If $p_i$ is a hyperbolic saddle, then $r_i=\frac{|\nu_i|}{\lambda_i}$, where $\nu_i<0<\lambda_i$ are the eigenvalues of the jacobian matrix of $Z$ at $p$ (recall that in this case $p\not\in\Sigma)$.
	\end{enumerate}
	Moreover, the \emph{graphic number} $r\in\mathbb{R}_{>0}\cup\{\infty\}$ of $\Gamma^n$ is given by
		\[r=\left\{\begin{array}{ll}
			\displaystyle\prod_{i=1}^{n}r_i, & \text{if every jump singularity has finite power order,} \\
			\displaystyle\infty, & \text{otherwise}.
		\end{array}\right.\]
\end{definition}

Let $\Gamma^n$ be a hyperbolic polycycle and $\pi\colon\ell\to\ell$ be its associated first return map, see Figure~\ref{Fig10}. Without loss of generality we can identify $\ell$ with the semi-open interval $I=\{x\in\mathbb{R}\colon 0\leqslant x<\delta\}$, with $\delta>0$ small enough such that the intersection of $\ell$ with $\Gamma^n$ is given by $x=0$. We say that $\Gamma^n$ is \emph{stable} (resp. \emph{unstable}) if $\pi(x)<x$ (resp. $\pi(x)>x$) for every $0<x<\delta$. Our main result is the following.

\begin{theorem}\label{T1}
	Let $Z$ be a hybrid system with a hyperbolic polycycle $\Gamma^n$ with graphic number $r$. Then $\Gamma^n$ is stable if $r>1$ and unstable if $r<1$. In particular if $\Gamma^n$ has a jump singularity with infinity power order, then $\Gamma^n$ is stable.
\end{theorem}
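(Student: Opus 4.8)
The plan is to reduce the statement to a leading–order analysis of the first return map $\pi\colon\ell\to\ell$. Identifying $\ell$ with $[0,\delta)$ as in the statement, I would write $\pi$, taken once around $\Gamma^n$, as a finite composition of the form
\[
\pi=\rho_n\circ T_n\circ\rho_{n-1}\circ T_{n-1}\circ\cdots\circ\rho_1\circ T_1,
\]
where $T_i$ is a local transition map across the singularity $(p_i,\overline{p_i})$ and each $\rho_i$ is the transition along the regular orbit $L_i$, from a small transversal near $p_i$ to one near $\overline{p_{i+1}}$. Since, by hypothesis $(a)$ of the hyperbolic polycycle, each $L_i$ meets $\Sigma$ only at finitely many jump crossing points, and by Definition~\ref{definicao1} the jump at such a point is a diffeomorphism transverse to the flow on both sides, every $\rho_i$ is a composition of flow-box maps and diffeomorphisms; hence $\rho_i$ is a diffeomorphism germ with $\rho_i(0)=0$ and $\rho_i'(0)>0$, so $\rho_i(x)=\lambda_i x(1+o(1))$ with $\lambda_i>0$. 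Thus $\pi$ is governed by the local maps $T_i$.

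The core of the argument is the asymptotics of each $T_i$. When $(p_i,\overline{p_i})$ is a hyperbolic saddle, $T_i$ is the classical Dulac map and satisfies $T_i(x)=c_i x^{r_i}(1+o(1))$ with $c_i>0$ and $r_i=|\nu_i|/\lambda_i$; this is the smooth case treated by Cherkas and Roussarie, whose possible resonant (logarithmic) corrections do not affect the comparison below. The new ingredient is the transition across a jump singularity, which I would obtain as a three–fold composition. Choosing transversals $\sigma_s,\sigma_u$ to $L_s,L_u$ and coordinates on $\Sigma_i$ centred at $p$ and $\overline{p}$, the incoming flow of the stable component $X_s$ (contact order $n_s$ at $p$) maps $\sigma_s$ to $\Sigma_i$ by $x\mapsto s\sim x^{1/n_s}$; the jump $\varphi_i$ maps $\Sigma_i$ near $p$ to $\Sigma_i$ near $\overline{p}$ by $s\mapsto\varphi_i(s)$; and the outgoing flow of the unstable component $X_u$ (contact order $n_u$ at $\overline p$) maps $\Sigma_i$ to $\sigma_u$ by $\bar s\mapsto y\sim\bar s^{\,n_u}$. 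The contact-order exponents $1/n_s$ and $n_u$ are established exactly as in the Filippov tangential analysis of Santana~\cite{San2023}, reproducing the normal-form computation there for a contact of order $m$ (in the purely tangential case $\varphi_i=\mathrm{id}$ this already recovers $r_i=n_u/n_s$).

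If $p_i$ has finite power order $k_i$, then $\varphi_i(s)=a_{k_i}s^{k_i}(1+o(1))$ and composing the three maps yields
\[
T_i(x)\sim c_i\,x^{\,n_{i,u}k_i/n_{i,s}}=c_i\,x^{r_i},\qquad r_i=\frac{n_{i,u}}{n_{i,s}}\,k_i,\quad c_i>0,
\]
matching the definition of $r_i$. If instead $p_i$ has infinite power order, then $\varphi_i$ is flat, so $\varphi_i(s)=o(s^N)$ for every $N$, and the three–fold composition produces a transition flatter than any power, $T_i(x)=o(x^N)$ for all $N$. I would then assemble the pieces: since composing $x\mapsto ax^{\alpha}$ with $x\mapsto bx^{\beta}$ produces $x\mapsto ab^{\alpha}x^{\alpha\beta}$, and composing with the diffeomorphisms $\rho_i$ only multiplies the leading constant by a positive number without changing the exponent, in the finite case one obtains $\pi(x)=Cx^{r}(1+o(1))$ with $C>0$ and $r=\prod_{i=1}^{n}r_i$. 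Hence $r>1$ gives $\pi(x)<x$ and $r<1$ gives $\pi(x)>x$ for all sufficiently small $x>0$, which are stability and instability. If some jump singularity has infinite power order, the corresponding super-flat $T_{i_0}$ forces $\pi(x)=o(x^N)$ for every $N$ (a positive power of a super-flat quantity is super-flat, and pre- and post-composition by power maps preserves this), so $\pi(x)<x$ near $0$ and $\Gamma^n$ is stable.

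The main obstacle is the rigorous derivation and composition of the jump–singularity transition $T_i$: one must justify the contact-order exponents $1/n_s$ and $n_u$ for the flow near a (possibly invisible) tangency of prescribed order, keeping careful track of which side the orbits enter and leave so that $T_i$ is genuinely defined on the relevant one-sided neighbourhood, and then control the flat remainder of $\varphi_i$ together with the error terms of the flow maps so that the leading monomial survives the composition. A secondary technical point is that the resonant corrections of the saddle Dulac maps and the non-integer exponents produced by the tangencies must compose without destroying the estimate $\pi(x)=Cx^{r}(1+o(1))$; this is handled as in the smooth and Filippov settings, where the comparison of $Cx^{r}$ with $x$ depends only on the sign of $r-1$.
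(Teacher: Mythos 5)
Your proposal is correct and follows essentially the same route as the paper: the first return map is decomposed into regular diffeomorphic transitions $\rho_i$ along the $L_i$, Dulac maps at the hyperbolic saddles, and three-fold compositions $T_i^u\circ\varphi_i\circ\bigl(T_i^s\bigr)^{-1}$ at the jump singularities, with the tangential exponents $n_{i,u}$ and $1/n_{i,s}$ taken from the normal form of \cite{AndGomNov2023} exactly as in \cite{San2023}, and the flat case handled by noting that $\varphi_i$ is then smaller than any power. The only real difference is that you invoke the full asymptotic $D(x)=c\,x^{r_i}(1+o(1))$ for the Dulac map (which requires care with resonances and normalizing coordinates), whereas the paper avoids this entirely by using only Sotomayor's elementary two-sided power bounds with perturbed exponents $|\nu|/(\lambda\pm\varepsilon)\to r_i$, which already suffice for the comparison of $\pi(x)$ with $x$ once $r\neq1$.
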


We observe that if we suppose that $\varphi_i=\text{Id}_{\Sigma_i}$ for every $i\in\{1,\dots,N\}$, then the hybrid system $Z$ becomes a Filippov system. In this case our main result reduces to \cite[Theorem~$1$]{San2023}, which deals with the stability of polycycles in Filippov systems. Moreover, if we also suppose that every singularity of $\Gamma^n$ is of jump type (and consequently tangential) and $n_{i,s}=1$ for every $i\in\{1,\dots,n\}$ (where $n_{i,s}$ is the stable contact order of $p_i$), then we have the so called \emph{regular-tangential} $\Sigma$-polycycles studied by Andrade, Gomide and Novaes~\cite{AndGomNov2023}. In this context the authors in \cite[Propositions~$1$ and $3$]{AndGomNov2023} proved that such polycycles are stable. We observe that this fact can also be obtained from our main result, by observing that $r_i>1$ for every $i\in\{1,\dots,n\}$ and thus $r>1$. Finally, we observe that if we suppose $\varphi_i=\text{Id}_{\Sigma_i}$ for every $i\in\{1,\dots,N\}$ and $X_j=X$ for every $j\in\{1,\dots,M\}$, then $Z=X$ becomes a planar smooth vector field. In this case Theorem~\ref{T1} becomes the main result of Cherkas \cite{Cherkas}, which deals with the stability of generic polycycles in planar smooth vector fields, as described in Section~\ref{Sec1}.

\section{Preliminaries}\label{Sec3}

\subsection{Transition map near a hyperbolic saddle}\label{Sec3.1}

Let $X$ be a planar vector field of class $C^\infty$ such that the origin $\mathcal{O}$ is a hyperbolic saddle. After a $C^\infty$-change of variables, we can assume that the stable and unstable manifolds of $\mathcal{O}$ are given by the axis $Oy$ and $Ox$ respectively in a sufficient small neighborhood of $\mathcal{O}$. Let
	\[\sigma=\{(x,y_0)\in\mathbb{R}^2\colon0<x<x_1\}, \quad \tau=\{(x_0,y)\in\mathbb{R}^2\colon 0<y<y_1\},\]
where $y_0>0$, $x_0>0$, $y_1>0$ and $x_1>0$ are small enough. The \emph{Dulac map} $D\colon\sigma\to\tau$ is the map given by the flow of $X$. See Figure~\ref{Fig5}.
\begin{figure}[ht]
	\begin{center}
		\begin{overpic}[height=4cm]{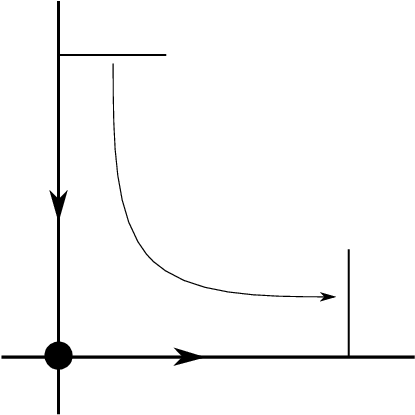} 
			\put(42,85){$\sigma$}
			\put(82,42){$\tau$}
			\put(4,4){$\mathcal{O}$}
			\put(0,94){$Oy$}
			\put(89,5){$Ox$}
			\put(38,42){$D$}
		\end{overpic}
	\end{center}
	\caption{The Dulac map near a hyperbolic saddle.}\label{Fig5}
\end{figure}

\begin{proposition}[Lemma $2.3$, Section $2.2$ of \cite{Soto}]\label{P1}
	Let $X$ be a vector field of class $C^\infty$ with a hyperbolic saddle at the origin $\mathcal{O}$ with the associated eigenvalues $\nu<0<\lambda$. Suppose also that the stable and unstable manifold of $p$ are given by the axis $Oy$ and $Ox$ respectively and let $D\colon\sigma\to\tau$ be the associated Dulac map. Then given $\varepsilon>0$, there is $\delta>0$ such that
		\[\delta^{1-\frac{|\nu|}{\lambda-\varepsilon}}x^{\frac{|\nu|}{\lambda-\varepsilon}}<D(x)<\delta^{1-\frac{|\nu|}{\lambda+\varepsilon}}x^{\frac{|\nu|}{\lambda+\varepsilon}}.\]	
\end{proposition}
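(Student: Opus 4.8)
The plan is to exploit the fact that both the stable axis $Oy$ and the unstable axis $Ox$ are invariant, which lets me factor the vector field and reduce the whole estimate to controlling a single scalar quantity. Writing $X=(P,Q)$, invariance of $\{x=0\}$ and $\{y=0\}$ forces $P(0,y)=0$ and $Q(x,0)=0$, so near $\mathcal{O}$ I can write
\[
	\dot x = x\,h(x,y), \qquad \dot y = y\,g(x,y),
\]
with $h,g$ of class $C^\infty$ and $h(0,0)=\lambda>0$, $g(0,0)=\nu<0$. Along any orbit in the open first quadrant (which the orbit through a point of $\sigma$ cannot leave, since it cannot cross the invariant axes) both $\ln x$ and $\ln y$ are differentiable, with $\tfrac{d}{dt}\ln x = h$ and $\tfrac{d}{dt}\ln y = g$; in particular $h>0$ near $\mathcal{O}$ makes $x$ strictly increasing along the orbit, so $x$ may serve as the independent variable.

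First I would eliminate time and write the orbit equation in separated form,
\[
	\frac{dy}{y} = \frac{g(x,y)}{h(x,y)}\,\frac{dx}{x}.
\]
The quotient $g/h$ is continuous and tends to $\nu/\lambda = -|\nu|/\lambda$ as $(x,y)\to\mathcal{O}$. Since $-\tfrac{|\nu|}{\lambda-\varepsilon} < -\tfrac{|\nu|}{\lambda} < -\tfrac{|\nu|}{\lambda+\varepsilon}$, given $\varepsilon>0$ I can choose $\delta>0$ so small that on the box $(0,\delta]\times(0,\delta]$ one has
\[
	-\frac{|\nu|}{\lambda-\varepsilon} < \frac{g(x,y)}{h(x,y)} < -\frac{|\nu|}{\lambda+\varepsilon}.
\]
It is precisely this two-sided bound on the single function $g/h$ that produces the asymmetric exponents $|\nu|/(\lambda\mp\varepsilon)$ appearing in the statement, rather than a perturbation of $\nu$ and $\lambda$ separately.

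Next, choosing the transversals so that $\sigma$ sits at height $y_0=\delta$ and $\tau$ at $x_0=\delta$, I would integrate the separated equation along the orbit arc joining $(x,\delta)\in\sigma$ to $(\delta,D(x))\in\tau$. The left-hand side integrates to $\ln D(x)-\ln\delta$, while $\int_x^{\delta}\tfrac{d\xi}{\xi}=\ln(\delta/x)>0$; multiplying the pointwise bound on $g/h$ by this positive measure and integrating gives
\[
	-\frac{|\nu|}{\lambda-\varepsilon}\ln\frac{\delta}{x} < \ln D(x)-\ln\delta < -\frac{|\nu|}{\lambda+\varepsilon}\ln\frac{\delta}{x}.
\]
Rearranging via $-\ln(\delta/x)=\ln x-\ln\delta$ and exponentiating yields exactly
\[
	\delta^{\,1-\frac{|\nu|}{\lambda-\varepsilon}}\,x^{\frac{|\nu|}{\lambda-\varepsilon}} < D(x) < \delta^{\,1-\frac{|\nu|}{\lambda+\varepsilon}}\,x^{\frac{|\nu|}{\lambda+\varepsilon}},
\]
which is the claim.

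The routine-but-essential point I expect to be the main obstacle is the geometric bookkeeping that makes this integration legitimate: I must shrink $\delta$ (and the lengths $x_1,y_1$) enough that every orbit issuing from $\sigma$ actually reaches $\tau$ while remaining inside the box where the bound on $g/h$ holds, so that $D$ is well defined and the integral is taken over a genuine orbit arc on which $h>0$. This confinement-and-transversality fact — essentially that points of $\sigma$ near the stable manifold are swept across to $\tau$ by the saddle flow — is where hyperbolicity is really used; once it is in place, the separation-of-variables estimate above is immediate.
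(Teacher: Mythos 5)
Your argument is correct, and it is essentially the classical proof of this estimate: the paper itself does not prove Proposition~\ref{P1} but quotes it from Sotomayor (with a translated proof in \cite[Proposition~2]{San2023}), and that proof proceeds exactly as you do --- factor the field as $\dot x = xh$, $\dot y = yg$ using the invariance of the axes, trap $g/h$ between $-|\nu|/(\lambda-\varepsilon)$ and $-|\nu|/(\lambda+\varepsilon)$ on a small box, and integrate $d\ln y/d\ln x$ along the orbit arc. The only point worth making explicit is the one you already flag: the $\delta$ in the conclusion is the common height/abscissa of the transversals ($y_0=x_0=\delta$), so shrinking $\delta$ simultaneously repositions $\sigma$ and $\tau$ and guarantees confinement in the box where the bound on $g/h$ holds.
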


We observe that Proposition~\ref{P1} is due to Sotomayor~\cite{Soto}. Since the original reference is not in English and, as far as we known there was no translation of it, the authors provided a translation of the original proof. See \cite[Proposition $2$]{San2023}. For a complete characterization of the Dulac map we refer to the works of Marin and Villadelprat~\cite{MarVil2020,MarVil2021,MarVil2024}.

\subsection{Transition map near a jump singularity}\label{Sec3.2}

Let $Z=(\mathfrak{X},\Sigma,\Phi)$ be a hybrid system with a hyperbolic polycycle $\Gamma^n$. Let $(p,\overline{p})\in\Sigma_i$ be a non-flat jump singularity of $\Gamma^n$ and $X_s$, $X_u$ be the stable and unstable components of $Z$ defined at $p$. Let also $n_s$ and $n_u$ be the stable and unstable contact orders of $p$. The aim of this section is to create a \emph{transition map} that works near the jump singularity $(p,\overline{p})$. To do this, first we assume that the map $\varphi_i\colon\Sigma_i\to\Sigma_i$ is such that there is a neighborhood $J\subset\Sigma_i$ of $p$ with $\varphi_i\colon J\to\Sigma_i$ the identity map. In particular $\overline{p}=p$ is now a tangential singularity and in a neighborhood $W\subset\mathbb{R}^2$ of $p$, the hybrid system $Z$ becomes a Filippov system. In this case let $B$ be a small enough neighborhood of $p$ and $\Psi:B\to\mathbb{R}^2$ be a $C^\infty$-change of coordinates such that $\Psi(p)=0$ and $\Psi(B\cap\Sigma)=Ox$. Let $l_s=\Psi(B\cap L_s)$, $l_u=\Psi(B\cap L_u)$ and $\tau_s$, $\tau_u$ be two small enough cross sections of $l_s$ and $l_u$ respectively. Let also
	\[\sigma=[0,\varepsilon)\times\{0\}, \quad \sigma=(-\varepsilon,0]\times\{0\}, \text{ or } \sigma=\{0\}\times[0,\varepsilon),\]
depending on $\Gamma^n$. It follows from \cite[Theorem~$1$]{AndGomNov2023} that $\Psi$ can be choosen such that the transition maps $T^{s,u}\colon\sigma\to\tau_{s,u}$ given by the flow of $X_{s,u}$ in this new coordinate system are well defined and are written as
\begin{equation}\label{3}
	T^u(x)=k_ux^{n_u}+O(x^{n_u+1}), \quad T^s(x)=k_sx^{n_s}+O(x^{n_s+1}),
\end{equation}
with $k_u\neq0$ and $k_s\neq0$. See Figure~\ref{Fig4}.
\begin{figure}[ht]
	\begin{center}
		\begin{minipage}{4.5cm}
			\begin{center}
				\begin{overpic}[width=4cm]{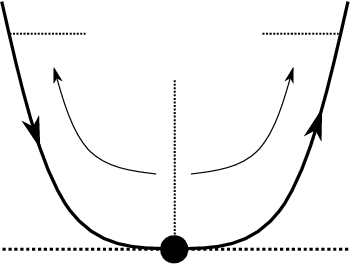} 
					\put(62,35){$T^u$}
					\put(30,35){$T^s$}
				\end{overpic}
				
				$(a)$
			\end{center}
		\end{minipage}
		\begin{minipage}{4.5cm}
			\begin{center}
				\begin{overpic}[width=4cm]{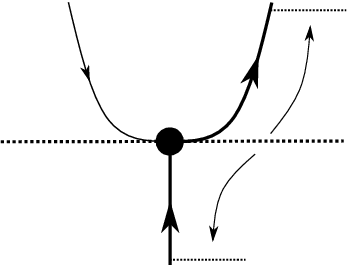} 
					\put(90,50){$T^u$}
					\put(70,15){$T^s$}
				\end{overpic}
				
				$(b)$
			\end{center}
		\end{minipage}
		\begin{minipage}{4.5cm}
			\begin{center}
				\begin{overpic}[width=4cm]{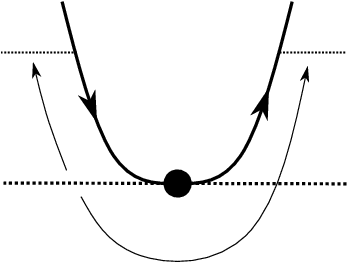} 
					\put(80,5){$T^u$}
					\put(0,30){$T^s$}
				\end{overpic}
		
				$(c)$
			\end{center}
	\end{minipage}
	\end{center}
\caption{Illustration of the maps $T^u$ and $T^s$.}\label{Fig4}
\end{figure}
We now work on the general case of a hybrid system $Z$. Let $(p,\overline{p})\in\Sigma_i$ be a jump of $\Gamma^n$ and let $X_s$ be the stable component of $p$ defined over the connected component $A_s$. Let $W\subset\mathbb{R}^2$ be a small neighborhood of $p$ and $\eta$ be a unitary normal vector to $\Sigma_i$ at $p$. Restricting $W$ if necessary, we can assume that $\eta$ is transversal to $\Sigma\cap W$. Let $A=W\backslash\overline{A_s}$ be the other connected component of $Z|_W$. On $W=A_s\cup\Sigma\cup A$ we construct the Filippov system $Y$ given by
	\[Y(x,y)=\left\{\begin{array}{ll}
					X_s(x,y), & \text{ if } (x,y)\in A_s, \vspace{0.2cm} \\
					\eta, & \text{ if } (x,y)\in A.
			  \end{array}\right.\]
Replacing $\eta$ by $-\eta$ if necessary, we observe that $Y$ has an unique and well defined regular orbit $L_u\subset A$ having $p$ as $\alpha$-limit. Hence it follows from \cite[Theorem~$1$]{AndGomNov2023} that after a change of variables the transitions maps $T^s$ and $T^u$ are well defined for $Y$ and are given by \eqref{3}. Since $Y|_{W\cap \overline{A_s}}=Z|_{W\cap\overline{A_s}}$, it follows that $T^s$ provides a transition map $T_p^s$ for $Z$ near $p$. Similarly it follows that we can obtain a transition map $T_{\overline{p}}^u$ of $Z$ near $\overline{p}$. See Figure~\ref{Fig3}.	
\begin{figure}[ht]
	\begin{center}
		\begin{overpic}[width=10cm]{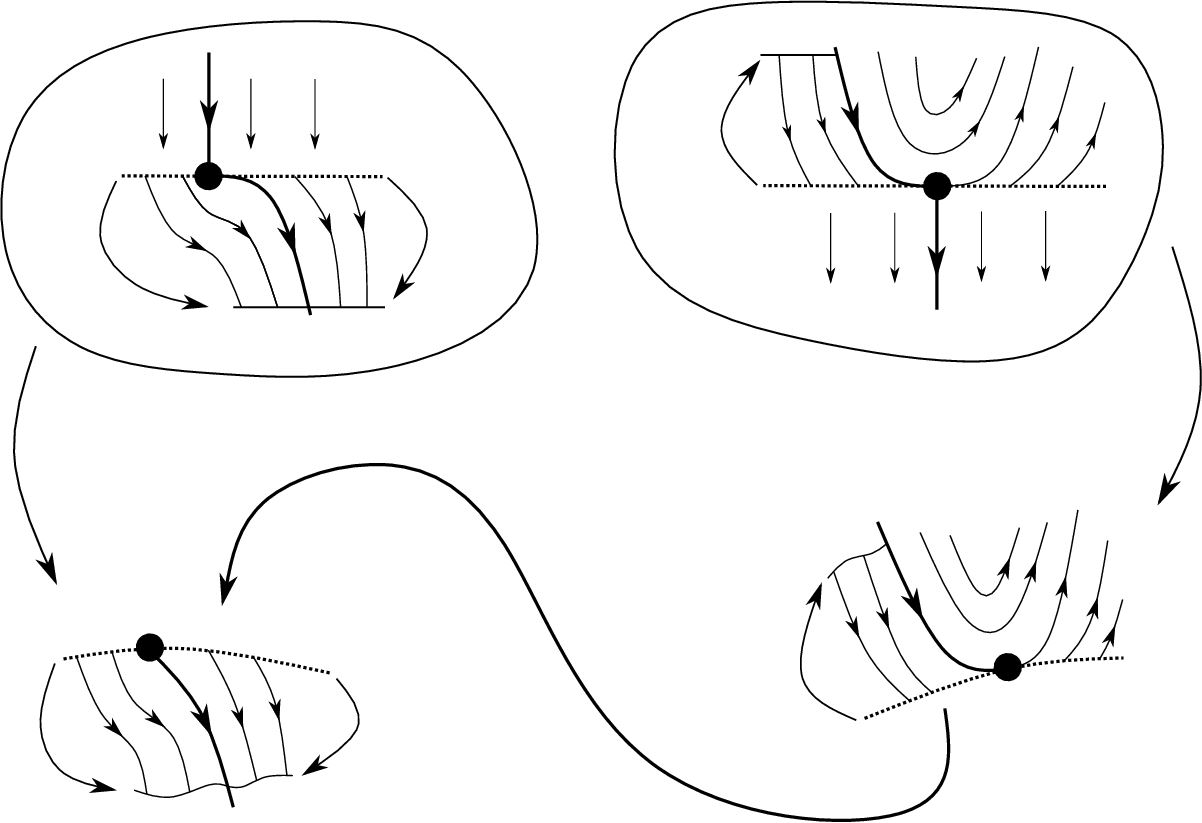}
		%\begin{overpic}[width=10cm,grid,tics=5]{Fig3.eps} %Esse é só para colocar os \put
			\put(55,56.5){$T^s$}
			\put(37,48){$T^u$}
			\put(3,47){$T^u$}
			\put(61,13){$T_p^s$}
			\put(31,8){$T_{\overline{p}}^u$}
			\put(-2,8){$T_{\overline{p}}^u$}
			\put(45,20){$\varphi_i$}
			\put(95,37){$\Psi_p$}
			\put(2,30){$\Psi_{\overline{p}}$}
		\end{overpic}
	\end{center}
	\caption{Illustration of the process to obtain the transition maps $T_p^s$ and $T_{\overline{p}}^u$.}\label{Fig3}
\end{figure}

\section{Proof of the main result}\label{Sec4}

We now proof our main result. For simplicity we assume that $Z=(\mathfrak{X},\Sigma,\Phi)$ is such that $Z$ has two components $X_1$, $X_2$ and $\Sigma=h^{-1}(0)$ has one component. We also suppose that $\Gamma^n=\Gamma^3$ is composed by a hyperbolic saddle $p_3$ and two jump singularities $(p_2,\overline{p_2})$ and $(p_1,\overline{p_1})$, with $\overline{p_1}=p_1$, see Figure~\ref{Fig6}. As we shall see, the proof of the general case follows similarly.

\begin{proof}[Proof of Theorem~\ref{T1}] Let $B_3$, $B_2$, $B_{\overline{2}}$ and $B_1$ be small enough neighborhoods of $p_3$, $p_2$, $\overline{p_2}$ and $p_1$ respectively and let $\Psi_3\colon B_3\to\mathbb{R}^2$ be the change of variables given by Section~\ref{Sec3.1}. Let $X_s$, $X_u$ be the stable and unstable components of $p_2$ defined over the connected components $A_s$, $A_u\subset\mathbb{R}^2\backslash\Sigma$. Let $\Psi_2\colon B_2\cap\overline{A_s}\to\mathbb{R}^2$ and $\Psi_{\overline{2}}\colon B_2\cap\overline{A_u}\to\mathbb{R}^2$ be the change of variables given in Section~\ref{Sec3.2}. Similarly let $\Psi_1\colon W_1\subset B_1\to\mathbb{R}^2$ and $\Psi_{\overline{1}}\colon W_{\overline{1}}\subset B_1\to\mathbb{R}^2$ be the change of variables associated to $p_1$ and $\overline{p_1}$ respectively. See Figure~\ref{Fig6}. 
\begin{figure}[ht]
	\begin{center}
		\begin{overpic}[width=15cm]{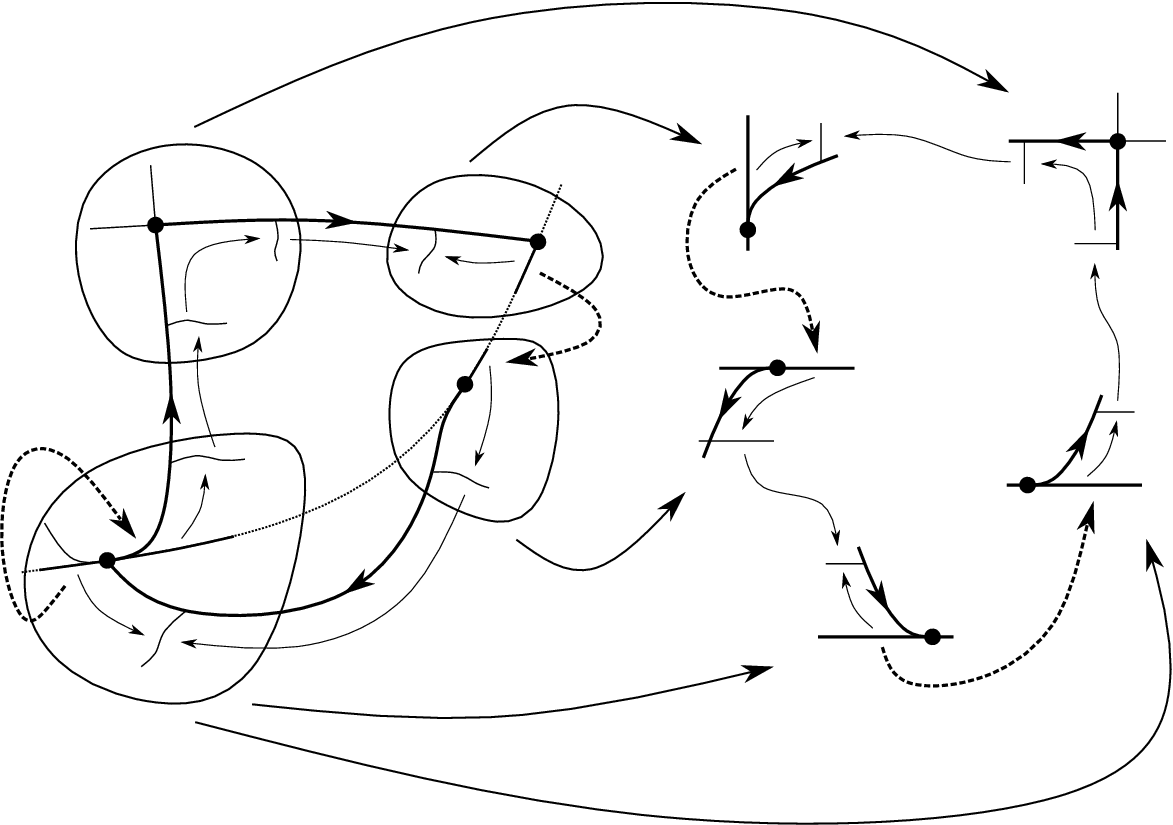}
			%\begin{overpic}[width=15cm,grid,tics=5]{Fig5.eps} %Esse é só para colocar os \put
			\put(14,52.5){$p_3$}
			\put(47,49){$p_2$}
			\put(36.5,38){$\overline{p_2}$}
			\put(12,21){$p_1=\overline{p_1}$}
			\put(17,46){$\overline{D}$}
			\put(40,45){$\overline{T_2^s}$}
			\put(43,33){$\overline{T_2^u}$}
			\put(6,15){$\overline{T_1^s}$}
			\put(18,26){$\overline{T_1^u}$}
			\put(28,47){$\overline{\rho_3}$}
			\put(34,17){$\overline{\rho_2}$}
			\put(18,36){$\overline{\rho_1}$}
			\put(52,42){$\overline{\varphi_2}$}
			\put(2,33){$\overline{\varphi_1}$}
			\put(57,67.5){$\Psi_3$}
			\put(49,62){$\Psi_2$}
			\put(49,19){$\Psi_{\overline{2}}$}
			\put(44,7){$\Psi_1$}
			\put(70,1.5){$\Psi_{\overline{1}}$}
			\put(90,53){$D$}
			\put(65,59){$T_2^s$}
			\put(67,35){$T_2^u$}
			\put(69,18){$T_1^s$}
			\put(95.5,31){$T_1^u$}
			\put(78,56){$\rho_3$}
			\put(65,26.5){$\rho_2$}
			\put(96,42){$\rho_1$}
			\put(55,49){$\varphi_2$}
			\put(88,13){$\varphi_1$}
			\put(28.5,29){$\Sigma$}
		\end{overpic}
	\end{center}
	\caption{Illustration of the maps used in the proof of Theorem~\ref{T1}.}\label{Fig6}
\end{figure}

Let $T_i^{s,u}\colon\sigma_i^{s,u}\to\tau_i^{s,u}$, $i\in\{1,2\}$ and $D\colon\sigma_3\to\tau_3$ be the transition maps and the Dulac map given by Section~\ref{Sec3.2} and \ref{Sec3.1} respectively. Let
	\[\overline{\sigma}_i^{s}=\Psi_i^{-1}(\sigma_i^{s}), \quad  \overline{\sigma}_i^{u}=\Psi_{\overline{i}}^{-1}(\sigma_i^{u}), \quad  \overline{\sigma}_3=\Psi_3^{-1}(\sigma_3), \quad \overline{\tau}_3=\Psi_3^{-1}(\tau_3), \quad i\in\{1,2\}\]
and define $\overline{T_i^{s,u}}\colon\overline{\sigma}_i^{s,u}\to\overline{\tau}_i^{s,u}$, $i\in\{1,2\}$ and $\overline{D}\colon\overline{\sigma}_3\to\overline{\tau}_3$ by
	\[\overline{T_i^s}=\Psi_i^{-1}\circ T_i^s\circ\Psi_i, \quad \overline{T_i^u}=\Psi_{\overline{i}}^{-1}\circ T_i^u\circ\Psi_{\overline{i}}, \quad \overline{D}=\Psi_3^{-1}\circ D\circ\Psi_3, \quad i\in\{1,2\}.\]
See Figure~\ref{Fig6}. Let $\overline{\rho_3}\colon\overline{\tau}\to\overline{\tau}_2^s$, $\overline{\rho_2}\colon\overline{\tau}_2^u\to\overline{\tau}_1^s$ and $\overline{\rho_1}\colon\overline{\tau}_1^u\to\overline{\sigma_3}$ be given by the flow of $X_1$ and $X_2$. Let also $\rho_3\colon\tau\to\tau_2^s$, $\rho_2\colon\tau_2^u\to\tau_1^s$ and $\rho_1\colon\tau_1^u\to\sigma_3$ be given by
	\[\rho_3=\Psi_2\circ\overline{\rho_3}\circ\Psi_3^{-1}, \quad \rho_2=\Psi_1\circ\overline{\rho_2}\circ\Psi_{\overline{2}}^{-1}, \quad \rho_1=\Psi_3\circ\overline{\rho_1}\circ\Psi_{\overline{1}}^{-1}.\]
See Figure~\ref{Fig6}. Let $\overline{\varphi_i}\colon\overline{\sigma}_i^s\to\overline{\sigma}_i^u$, $i\in\{1,2\}$, be given by the restriction of $\overline{\varphi}\colon\Sigma\to\Sigma$. Finally let $\varphi_i\colon\sigma_i^s\to\sigma_i^u$, $i\in\{1,2\}$ be given by
	\[\varphi_i=\Psi_{\overline{i}}\circ\overline{\varphi_i}\circ\Psi_i^{-1}.\]
See Figure~\ref{Fig6}. Observe that we can identify each transversal section above  (i.e. $\sigma_i^{s,u}$, $\tau_i^{s,u}$ and so on) with the interval $I=\{x\in\mathbb{R}\colon 0\leqslant x<\delta\}$, such that in these coordinates the intersection of the transversal section with the polycycle is given by $x=0$. Hence each one of the functions above (i.e. $T_i^{s,u}$, $\overline{T_i^{s,u}}$, $\rho_i$, $\overline{\rho_i}$, etc) can be seen as a function $f\colon I\to I$ such that $f(0)=0$ and $f(x)\to0$ as $x\to0$. Moreover, since each of these functions is given by the flow of a smooth vector field in a particular neighborhood, we can also assume that $f$ is strictly increasing. Therefore, from now on in this proof we assume this. Let $\nu<0<\lambda$ be the eigenvalues associated with the hyperbolic saddle $p_3$. Given $\varepsilon>0$ let $r_{3,\varepsilon}=\frac{|\nu|}{\lambda+\varepsilon}$. It follows from Proposition~\ref{P1} that for every $\varepsilon>0$ small there is $C(\varepsilon)>0$ such that
\begin{equation}\label{4}
	D(x)<C(\varepsilon)x^{r_{3,\varepsilon}}.
\end{equation}
Let $n_{i,s}$ and $n_{i,u}$, $i\in\{1,2\}$ be the stable and unstable contact orders of $p_1$ and $p_2$ respectively. It follows from Section~\ref{Sec3.2} that
\begin{equation}\label{5}
	T_i^s(x)=C_{i,s}x^{n_{i,s}}+O(x^{n_{i,s}+1}), \quad T_i^u(x)=C_{i,u}x^{n_{i,u}}+O(x^{n_{i,u}+1})
\end{equation}
with $C_{i,s}\neq0$ and $C_{i,u}\neq0$, $i\in\{1,2\}$. Suppose first that both $p_1$ and $p_2$ have finite power order and let $k_i\in\mathbb{N}$ be the power order of $p_i$, $i\in\{1,2\}$, that is, $k_i\in\mathbb{N}$ is such that
\begin{equation}\label{6}
	\overline{\varphi_i}(x)=\left(\sum_{k=k_i}^{\infty}a_kx^k\right)+R_i(x),
\end{equation}
where $a_{k_i}\neq0$ and $R_i$ is a flat $C^\infty$-function such that $R_i(0)=0$, $i\in\{1,2\}$. Finally since $\Psi_i$, $\Psi_{\overline{j}}$ and $\overline{\rho_i}$, $i\in\{1,2,3\}$, $j\in\{1,2\}$, are diffeomorphisms, we observe that
\begin{equation}\label{7}
	\rho_i(x)=c_ix+O(x^2), 
\end{equation}
with $c_i\neq0$, $i\in\{1,2,3\}$. We recall that the \emph{hyperbolicity ratio} of $p_1$, $p_2$, and $p_3$ and the \emph{graphic number} of $\Gamma^3$ are given by
	\[r_1=\frac{n_{1,u}}{n_{1,s}}k_1, \quad r_2=\frac{n_{2,u}}{n_{2,s}}k_2, \quad r_3=\frac{|\nu|}{\lambda}, \quad r=r_1r_2r_3.\]
Moreover, observe that $r_{3,\varepsilon}\to r_3$ as $\varepsilon\to0$. Hence if we let $\pi\colon\sigma_3\to\sigma_3$ be given by
\begin{equation}\label{20}
	\pi=\rho_1\circ T_1^u\circ\varphi_1\circ \bigl(T_1^s\bigr)^{-1}\circ\rho_2\circ T_2^u\circ\varphi_2\circ\bigl(T_2^s\bigr)^{-1}\circ\rho_3\circ D,
\end{equation}
then it follows from \eqref{4}, \eqref{5}, \eqref{6} and \eqref{7} that for $\varepsilon>0$ small enough we have
\begin{equation}\label{8}
\begin{array}{rl}
	\displaystyle \pi(x) =&\displaystyle \rho_1\circ T_1^u\circ\varphi_1\circ \bigl(T_1^s\bigr)^{-1}\circ\rho_2\circ T_2^u\circ\varphi_2\circ\bigl(T_2^s\bigr)^{-1}\circ\rho_3\circ D(x) \vspace{0.2cm} \\
	<&\displaystyle \rho_1\circ T_1^u\circ\varphi_1\circ \bigl(T_1^s\bigr)^{-1}\circ\rho_2\circ T_2^u\circ\varphi_2\circ\bigl(T_2^s\bigr)^{-1}\circ\rho_3\bigl(C(\varepsilon)x^{r_{3,\varepsilon}}\bigr) \vspace{0.2cm} \\
	=&\displaystyle \rho_1\circ T_1^u\circ\varphi_1\circ \bigl(T_1^s\bigr)^{-1}\circ\rho_2\circ T_2^u\circ\varphi_2\circ\bigl(T_2^s\bigr)^{-1}\bigl(C_1(\varepsilon)x^{r_{3,\varepsilon}}+\dots\bigr) \vspace{0.2cm} \\
	=&\displaystyle \rho_1\circ T_1^u\circ\varphi_1\circ \bigl(T_1^s\bigr)^{-1}\circ\rho_2\circ T_2^u\circ\varphi_2\bigl(C_2(\varepsilon)x^{\frac{1}{n_{2,s}}r_{3,\varepsilon}}+\dots\bigr) \vspace{0.2cm} \\
	=&\displaystyle \rho_1\circ T_1^u\circ\varphi_1\circ \bigl(T_1^s\bigr)^{-1}\circ\rho_2\circ T_2^u\bigl(C_3(\varepsilon)x^{\frac{1}{n_{2,s}}k_2r_{3,\varepsilon}}+\dots\bigr) \vspace{0.2cm} \\
	=&\displaystyle \rho_1\circ T_1^u\circ\varphi_1\circ \bigl(T_1^s\bigr)^{-1}\circ\rho_2\bigl(C_4(\varepsilon)x^{r_2r_{3,\varepsilon}}+\dots\bigr) \vspace{0.2cm} \\
	\vdots & \\
	=&\displaystyle K(\varepsilon)x^{r_1r_2r_{3,\varepsilon}}+ O(x^{r_1r_2r_{3,\varepsilon}+1}).
\end{array}
\end{equation}
Hence if we let $r(\varepsilon)=r_1r_2r_{3,\varepsilon}$, then we conclude that
	\[0\leqslant \pi(x)< K(\varepsilon)x^{r(\varepsilon)}+O(x^{r(\varepsilon)+1}),\]
with $r(\varepsilon)\to r$ as $\varepsilon\to0$. Therefore if $r>1$, then there is $\varepsilon_0>0$ small enough such that $r(\varepsilon_0)>1$. Hence
\begin{equation}\label{17}
	\frac{\pi(x)}{x}< K(\varepsilon_0)x^{r(\varepsilon_0)-1}+O(x^{r(\varepsilon_0)}).
\end{equation}
Since $r(\varepsilon_0)-1>0$, we have $x^{r(\varepsilon_0)-1}\to0$ as $x\to0^+$ and thus from \eqref{17} it follows $\pi(x)/x<1$ for $x>0$ small enough. Which in turn implies that
\begin{equation}\label{19}
	\pi(x)<x,
\end{equation}
for $x>0$ small enough. We now use property \eqref{19} of the map \eqref{20}, which takes place in the normal forms of the singularities of $\Gamma^3$, to study the `real' first return map $\overline{\pi}\colon\overline{\sigma}_3\to\overline{\sigma}_3$ given by
\begin{equation}\label{21}
	\overline{\pi}=\overline{\rho_1}\circ\overline{T_1^u}\circ\overline{\varphi_1}\circ \bigl(\overline{T_1^s}\bigr)^{-1}\circ\overline{\rho_2}\circ \overline{T_2^u}\circ\overline{\varphi_2}\circ\bigl(\overline{T_2^s}\bigr)^{-1}\circ\overline{\rho_3}\circ\overline{D},
\end{equation}
which actually takes place on the polycycle $\Gamma^3$, see Figure~\ref{Fig6}. It follows from \eqref{20}, \eqref{21} and from the definitions of the maps $T_i^{s,u}$, $\overline{T_i^{s,u}}$, $\rho_i$, $\overline{\rho_i}$, $D$, $\overline{D}$, etc that $\overline{\pi}=\Psi_3^{-1}\circ\pi\circ\Psi_3$. Therefore, if $\overline{x}\in\overline{\sigma}_3$ with $\overline{x}>0$ is small enough (i.e. if $\overline{x}$ is sufficiently near the polycycle), then $\Phi_3(\overline{x})=x\in\sigma_3$, $x>0$ is small enough and thus from \eqref{19} we have that
	\[\overline{\pi}(\overline{x})=\Psi_3^{-1}\circ\pi\circ\Psi_3(\overline{x})<\Psi_3^{-1}\circ\Psi_3(\overline{x})=\overline{x}.\]
So, we arrive in $\overline{\pi}(\overline{x})<\overline{x}$ for $\overline{x}>0$ small enough. Hence the polycycle is stable. The case $r<1$ follows similarly. We need only to replace \eqref{4} by
	\[C(\varepsilon)x^{s_{3,\varepsilon}}<D(x),\]
where $s_{3,\varepsilon}=\frac{|\nu|}{\lambda-\varepsilon}$ (see Proposition~\ref{P1}) and thus we obtain
	\[\pi(x)>K(\varepsilon_0)x^{s(\varepsilon_0)}+O(x^{s(\varepsilon_0)+1}),\]
for some fixed $\varepsilon_0>0$ small enough. Hence
\begin{equation}\label{9}
	\frac{\pi(x)}{x}>K(\varepsilon_0)x^{s(\varepsilon_0)-1}+O(x^{s(\varepsilon_0)}).
\end{equation}
Since $s(\varepsilon)>0$ and $s(\varepsilon)-1<0$, it follows from \eqref{9} that $\pi(x)>x$ for $x>0$ small enough and thus the polycycle is unstable. We now work with the case that some jump singularity has an infinity power order. Without loss of generality suppose that $p_1$ has infinity order. It follows from \eqref{6} that $\overline{\varphi_1}(x)=R_1(x)$, where $R_1$ is a flat function such that $R_1(0)=0$. Therefore, it follows that for every $k\in\mathbb{N}$ we have
\begin{equation}\label{10}
	\overline{\varphi_1}(x)<x^k,
\end{equation}
provided $x>0$ is small enough. Replacing \eqref{10} at \eqref{8} for a sufficiently large $k\in\mathbb{N}$, we obtain that $\pi(x)<x$ and thus the polycycle is stable. This finishes the proof. \end{proof}

\begin{remark}
	It follows from the proof of Theorem~\ref{T1} that the result also holds for other frameworks of hybrid systems, such as the \emph{Impulsive Discontinuous Dynamical Systems} (see Samoilenko and Perestyuk~\cite[Section $1.4$]{SamPer1995}) and the \emph{Impact Hybrid Systems} (see di Bernardo~\cite[Section $2.2.4$]{Bernardo2008}).
\end{remark}

\section{Example and applications}

Based on the Example~\ref{Ex1} of the bouncing ball model, we now present an example of model in which we can apply our main result.

\begin{example}
Consider the motion of a ball on a table with concave surface. Suppose that the surface of the table is given by the graph of a smooth function $f\colon[-1,1]\to\mathbb{R}$ such that $x=0$ is the only zero of $f'(x)$. Suppose also that at $x=-1$ and $x=1$ (i.e. the boundaries of the table) we have two walls preventing the ball to roll off the table. See Figure~\ref{Fig13}.	
\begin{figure}[ht]
	\begin{center}
		\begin{minipage}{8cm}
			\begin{center} 
				\begin{overpic}[height=5cm]{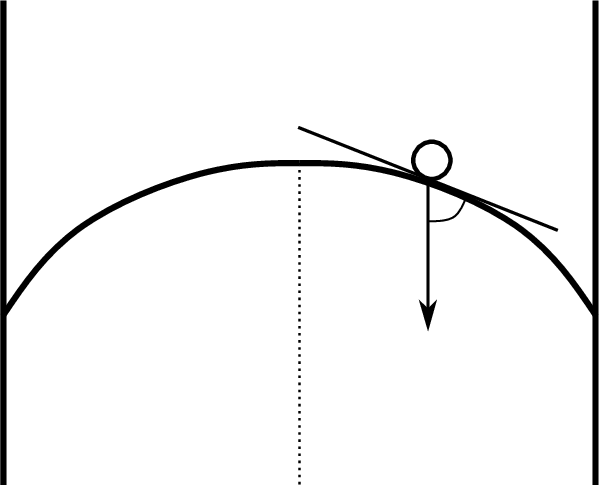} 
				%\begin{overpic}[height=5cm,grid,tics=5]{Fig23.eps} 
					\put(-8,-5){$x=-1$}
					\put(92,-5){$x=1$}
					\put(42,-5){$x=0$}
					\put(73,38){$\theta(x)$}
					\put(66,25){$g$}
				\end{overpic}
				
				$\;$
				
				$(a)$
			\end{center}
		\end{minipage}
		\begin{minipage}{8cm}
			\begin{center} 
				\begin{overpic}[height=5cm]{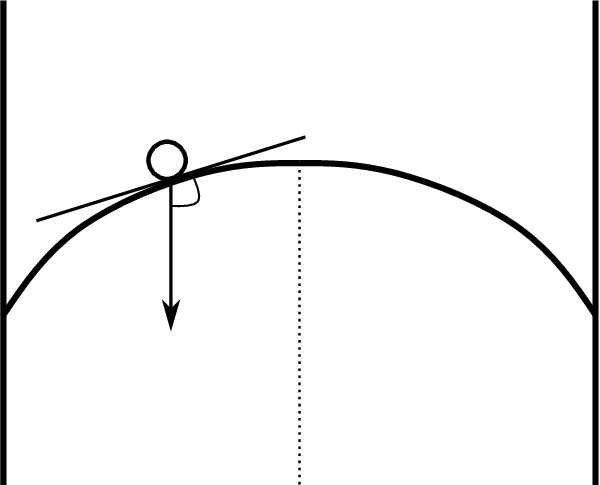} 
				%\begin{overpic}[height=5cm,grid,tics=5]{Fig24.eps} 
					\put(-8,-5){$x=-1$}
					\put(92,-5){$x=1$}
					\put(42,-5){$x=0$}
					\put(34,44){$\theta(x)$}
					\put(31,26){$g$}
				\end{overpic}
				
				$\;$
				
				$(b)$
			\end{center}
		\end{minipage}
	\end{center}
	\caption{Illustration of the model, with $(a)$ $x>0$ and $(b)$ $x<0$. Observe that $\theta(x)<\pi/2$ if $x>0$ and $\theta(x)>\pi/2$ if $x<0$.}\label{Fig13}
\end{figure}
Let $x\in[-1,1]$ denote the position of the ball and $v\in\mathbb{R}$ be its initial velocity, where $v>0$ (resp. $v<0$) means that the ball is moving to the right-hand side (resp. left-hand side) of the table. If the ball is under the acceleration of constant gravity $g>0$, then the state of the ball if governed by the system of differential equations,
\begin{equation}\label{12}
	\dot x=v, \quad \dot v=g\cos\theta(x),
\end{equation}
where $\theta\colon[-1,1]\to\mathbb{R}$ is the angle between the vertical axis and the tangent line of the table at the ball, in the counterclockwise direction. See Figure~\ref{Fig13}. For $x\in(-1,1)$ and $v\in\mathbb{R}$, The Jacobian matrix of \eqref{12} is given by
\begin{equation}\label{13}
	J(x,v)=\left(\begin{array}{cc} 0 & 1 \\ \vspace{0.2cm} -g\sin\theta(x)\theta'(x) & 0 \end{array}\right).
\end{equation}
Knowing that $f'(x)$ is the slope between the tangent line and the horizontal axis, it is not hard to see that
\begin{equation}\label{14}
	\theta(x)=\frac{\pi}{2}+\arctan f'(x).
\end{equation}
Hence, it follows from \eqref{12}, \eqref{13}, \eqref{14} and $f'(0)=0$ that the origin is the unique singularity and its Jacobian matrix is given by,
\begin{equation}\label{16}
	J(0,0)=\left(\begin{array}{cc} 0 & 1 \\ \vspace{0.2cm} -gf''(0) & 0 \end{array}\right).
\end{equation}
Since $f$ is concave, it follows that $f''(0)<0$ and thus we have $\det J(0,0)=gf''(0)<0$. That is, the origin is a hyperbolic saddle. See Figure~\ref{Fig14}$(a)$.
\begin{figure}[ht]
	\begin{center}
		\begin{minipage}{8cm}
			\begin{center} 
				\begin{overpic}[height=5cm]{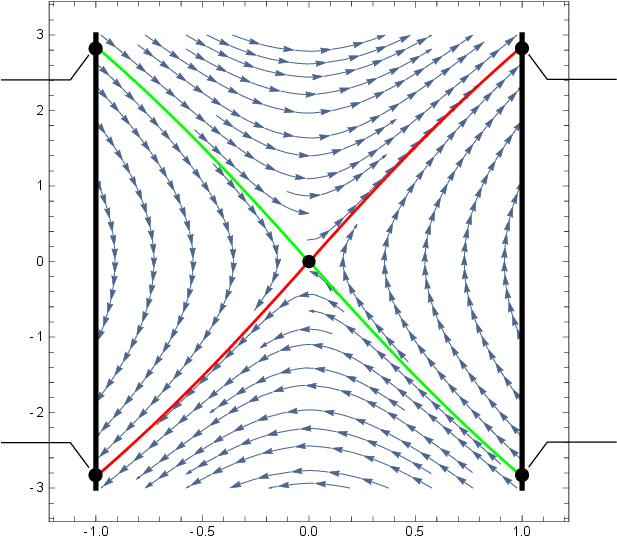} 
				%\begin{overpic}[height=5cm,grid,tics=5]{Fig25x.eps} 
					\put(95,77){$p_1$}
					\put(95,18){$p_2$}
					\put(-2,18){$p_3$}
					\put(-2,77){$p_4$}	
				\end{overpic}

				$(a)$
			\end{center}
		\end{minipage}
		\begin{minipage}{8cm}
			\begin{center} 
				\begin{overpic}[height=5cm]{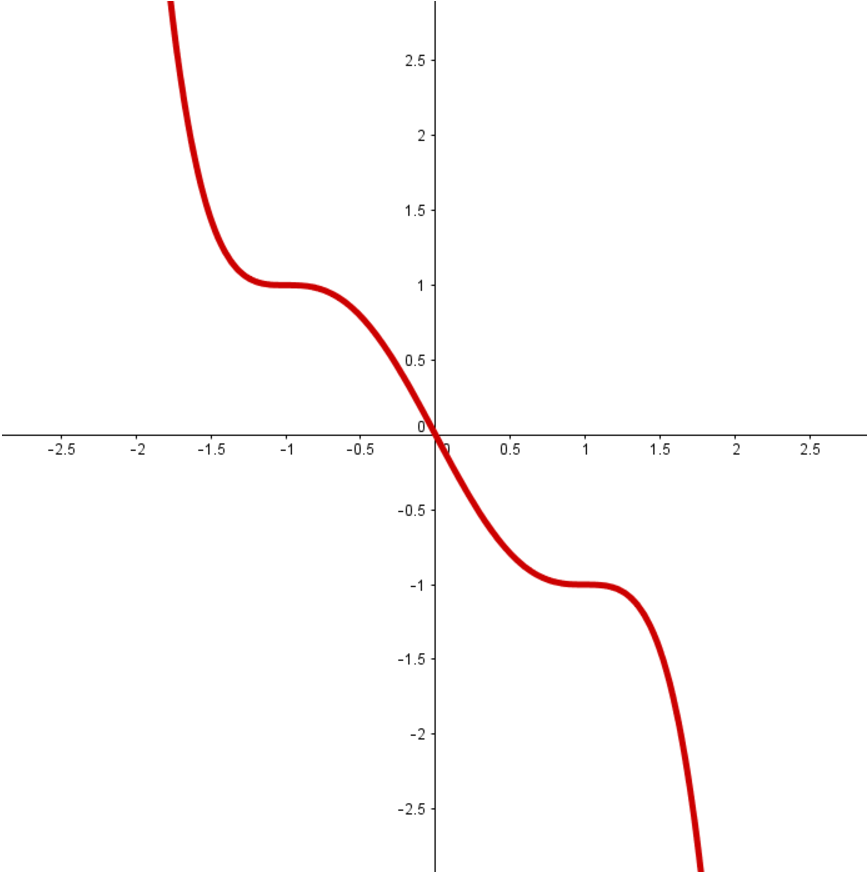} 
				%\begin{overpic}[height=5cm,grid,tics=5]{Fig26.eps} 

				\end{overpic}

				$(b)$
			\end{center}
		\end{minipage}
	\end{center}
	\caption{Illustration of $(a)$ the phase portrait of system \eqref{12} with $g=9.8$ and $f(x)=-\frac{1}{2}x^2$ and $(b)$ the map $\varphi$.}\label{Fig14}
\end{figure}
Observe that system \eqref{12} is reversible in relation to the lines $x=0$ and $v=0$. Hence, the stable and unstable manifolds of the origin intersects the lines $x=-1$ and $x=1$ symmetrically in the points $p_1=(1,v_0)$, $p_2=(1,-v_0)$, $p_3=(-1,-v_0)$ and $p_4=(-1,v_0)$, for some $v_0>0$. See Figure~\ref{Fig14}$(a)$. Rescaling the variable $v$ if necessary, we can assume $v_0=1$.

We can now suppose that both walls act like a pinball machine and thus when the ball hits the walls,it is tossed back to the table. More precisely we let the map $\varphi\colon\mathbb{R}\to\mathbb{R}$ be given by
\begin{equation}\label{15}
	\varphi(v)=-\frac{15}{8}v+\frac{5}{4}v^3-\frac{3}{8}v^5,
\end{equation}
represents the gain of energy of the ball when hitting each of the walls, see Figure~\ref{Fig14}$(b)$. Observe that $\varphi(1)=-1$ and $\varphi(-1)=1$ and thus $\varphi$ sends $p_1$ to $p_2$ and $p_3$ to $p_4$. In particular our hybrid system has a compound polycycle given by a figure in an eight shape. See Figure~\ref{Fig15}.
\begin{figure}[h]
	\begin{center}
		\begin{overpic}[width=12cm]{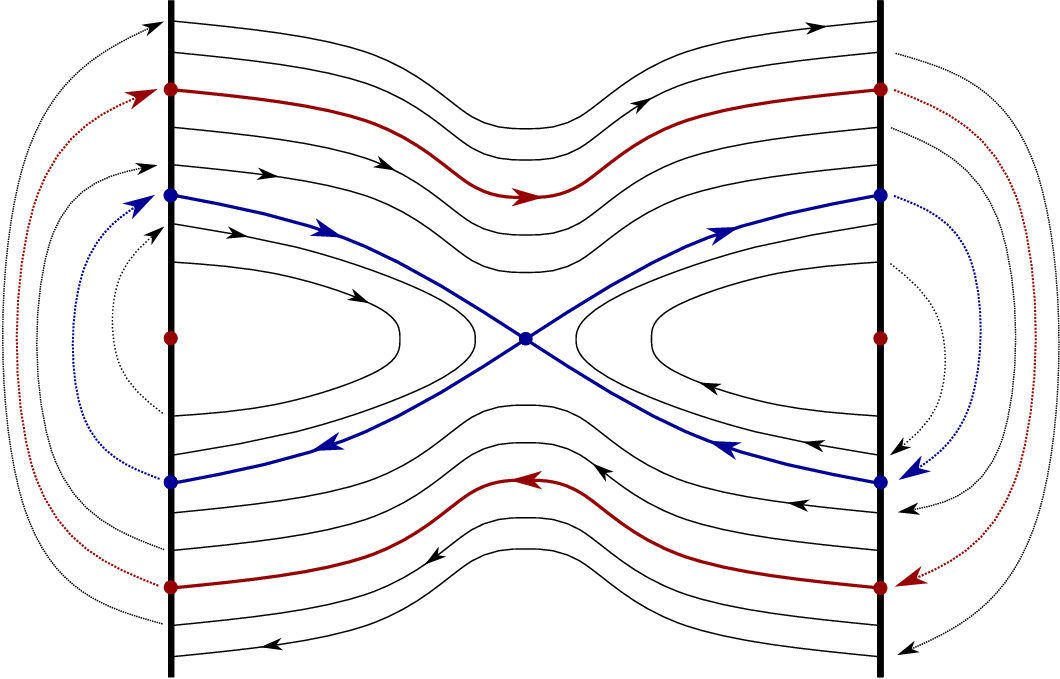}
		%\begin{overpic}[width=12cm,grid,tics=5]{Fig27xxx.eps} %Esse é só para colocar os \put
			\put(48.5,34){$\mathcal{O}$}
			\put(84,47){$p_1$}
			\put(79.5,16.75){$p_2$}
			\put(12.5,16.5){$p_3$}
			\put(17.5,46.5){$p_4$}
			\put(70,55.25){$\gamma$}
			\put(25,7.25){$\gamma$}
		\end{overpic}
	\end{center}
	\caption{Illustration of the hybrid system given by \eqref{12} and \eqref{15}. Blue means stable and red means unstable. Color available on the online version.}\label{Fig15}
\end{figure}
It follows from \eqref{16} that the hyperbolicity ratio of the origin is given by $r_0=1$. From \eqref{15} it is not hard to see that $\varphi'(\pm1)=\varphi''(\pm1)=0$ and $\varphi'''(\pm1)\neq0$. Hence, the hyperbolicity ratio of the jump singularities $(p_1,p_2)$ and $(p_3,p_4)$ are given by $r_L=3$ and $r_R=3$, respectively. 

Therefore, it follows from Theorem~\ref{T1} that the polycycles 
	\[\Gamma_L=\{\mathcal{O},(p_3,p_4)\}, \quad \Gamma_R=\{\mathcal{O},(p_1,p_2)\}, \quad \Gamma=\{\mathcal{O},(p_1,p_2),\mathcal{O},(p_3,p_4)\},\]
are stable. From \eqref{15} it is not hard to see that the orbits hitting the wall with velocity $|v|>0$ big enough spirals to the infinity, i.e., after a certain threshold the ``pinball'' walls feed one another and the balls keep going between left and right while the velocity increases. Hence if we consider a first return map $\pi\colon\ell\to\ell$, with $\ell=\{0\}\times\mathbb{R}_{v>0}$, then it follows from the Intermediate Value Theorem that we have at least one periodic orbit $\gamma$. See Figure~\ref{Fig15}. 
\end{example}

\section*{Acknowledgments}

We thank to the reviewers their comments and suggestions which help us to improve the presentation of this paper. The authors are partially supported by S\~ao Paulo Research Foundation (FAPESP), grants 2019/10269-3, 2021/01799-9 and 2022/14353-1, and by Coordination of Superior Level Staff Improvement (CAPES-Brazil), grant 88887.475751/2020-00.

\end{document}